\newtheorem{theorem}{Theorem}[section]
\newtheorem{lemma}[theorem]{Lemma}
\theoremstyle{remark}
\newtheorem{remark}[theorem]{Remark}
\theoremstyle{remark}
\newtheorem*{note}{Remark}
\theoremstyle{definition}
\newtheorem{definition}[theorem]{Definition}
\numberwithin{equation}{section}
\DeclareMathOperator{\supp}{supp}
\DeclareMathOperator{\Var}{Var}
\DeclareMathOperator{\sign}{sign}
\newcommand{\e}{\text{\bf E}}
\newcommand{\p}{\text{\bf P}}
\newcommand{\F}{\mathbb{F}}
\newcommand{\N}{\mathbb{N}}
\newcommand{\R}{\mathbb{R}}
\newcommand{\s}{\mathbb{S}}
\def\8{\infty}
\def\blab#1{\begin{equation}\label{#1}}
\def\elab{\end{equation}}
\def\blab#1{\begin{equation}\label{#1}}
\def\elab{\end{equation}}
\title[Fuzzy random walks]{Convergence of fuzzy random walks to a standard Brownian motion}
\author[J. Schneider]{Jan Schneider}
\address{Institute of Industrial Engineering and Management
at the Faculty of Computer and Management Science\\
Wroclaw University of Technology\\
ul.
{\L}ukasiewicza 3/5\\
50-372 Wroclaw, Poland}
\email{jan.schneider@pwr.edu.pl}
\author[R. Urban]{Roman Urban}
\address{Institute of Mathematics\\
Wroclaw University\\
Plac Grunwaldzki 2/4\\
50-384 Wroclaw, Poland}
\email{urban@math.uni.wroc.pl}
\subjclass[2000]{}
\keywords{fuzzy Donsker's theorem, fuzzy random walk, Brownian motion, $d$-dimensional fuzzy vectors, fuzzy random variables, embedding theorem, Bochner's expectation}
\begin{document}

\begin{abstract}
In this note we prove a fuzzy analogue of Donsker's classical invariance principle.  - We consider a fuzzy random walk ${S^*_n}=X^*_1+\cdots+X^*_n,$ where $\{X^*_i\}_1^{\infty}$ is a sequence of mutually independent and identically distributed $d$-dimensional fuzzy random variables whose $\alpha$-cuts are assumed to be compact and convex. \par
Our reasoning and technique are based on the well known conjugacy correspondence between convex sets and support functions, which allows for the association of an appropriately normalized and interpolated time-continuous fuzzy random process with a real valued random process in the space of support functions. \par
We show that each member of the associated family of dual sequences tends in distribution to a standard Brownian motion.
\end{abstract}

\maketitle

\section*{Introduction}

This paper extends Monroe D. Donsker's classical invariance principle~\cite{Don52} by incorporating fuzzy random processes in the theory. That is, we are looking at stochastic processes consisting of a collection of fuzzy random variables in the sense of Puri and Ralescu~\cite{PR91}, indexed by a continuous parameter, defined on some probability space, and taking values in a space of fuzzy vectors endowed with an appropriate metric and associated Borel sets.

\smallskip
We show that the fuzzy random walk\footnote{A superscript ${}^*$ in our notation means that we have a fuzzy object.}, ${S^*_n}={X^*_1}+\cdots+X^*_n$  may be appropriately normalized and interpolated to a time-continuous stochastic process and then associated with a time continuous process of with values in $\R$ which tends in distribution to a standard Brownian motion in the space of support functions.\par

\smallskip
Probability theory and stochastic processes in the context of fuzzy vectors has attracted considerable attention in recent years (just to name a few authors):\par\vspace{1mm}
Fuzzy random variables were first introduced by Kwakernaak in 1978~\cite{Kwa78}. The concept of a fuzzy random variable was further developed in different ways by several authors. In our work we draw on Puri and Ralescu~(1986)~\cite{PR86}.
Klement, Puri and Ralescu~\cite{Ketal}, Kr\"{a}tschmer~\cite{K} and Wu~\cite{Wu} worked on central limit theorems, being the all-important tool of developed statistical analysis of fuzzy (just as in the case of real) data.

\smallskip
Fuzzy stochastic processes and their applications in the natural and social sciences have been investigated by a number of authors, including Puri and Ralescu~\cite{PR91}), and Brownian motion concretely and applications was investigated in \cite{Fen00}, \cite{LONH01}, \cite{LG07}, \cite{Mic11}, \cite{Bon12}, \cite{Fei13}, \cite{Mal14}, to name a few.\par

\smallskip
Our reasoning and proof utilizes the specific duality between characterizing functions of fuzzy vectors, and their corresponding support functions.

\smallskip
This note consists of two sections:

\smallskip
In section~1 the requisite conceptual groundwork of fuzzy and convex analysis is set forth: fuzzy vectors, the interrelation between characterizing and support functions, fuzzy random variables and processes.\vspace{0.3mm}\par

In section~2 we state and prove the main theorem of this note.

\section{Fuzzy random variables}
In this section we introduce the basic concepts and give some definitions. \par{\vspace{1mm}

\subsection{Fuzzy vectors}
A {\em $d$-dimensional fuzzy vector $x^*$} is defined by and may be identified with its {\em characterizing function} (see e.g. \cite{V,V0}). In this paper we work with vectors whose characterizing functions satisfy
\begin{definition}\label{defxi}
The characterizing function $\xi_{x^*}$
of a $d$-dimensional fuzzy vector $x^*$ is a function $\xi_{x^*}:\R^d\to\R$ satisfying:
\begin{itemize}
\item[1)] $\xi_{x^*}:\R^d\to [0,1],$
\item[2)] $\supp{\xi_{x^*}}$ is bounded,
\item[3)] for every $\alpha\in(0,1]$ the so called $\alpha$-cut $C_\alpha(x^*)$ of the fuzzy vector $x^*$,
$C_\alpha(x^*)= \{x\in\R^d:\xi_{x^*}(x)\geq\alpha\}$
is a non-empty, compact, and convex set, as is $C_0(x^*):=\overline{\{x\in\R^d: \xi_{x^*}(x)>0\}}=\supp\xi_{x^*}.$
\end{itemize}
\end{definition}

In the following we denote the set of all $d$-dimensional fuzzy vectors satisfying Def. \ref{defxi} by $\mathcal F_{\text{cconv}}(\R^d)$.

\begin{note}
There are many more general definitions of fuzzy vectors. For example in \cite[Definition 2.3]{V} assumption 3) is replaced by 3$^\prime$) for every $\alpha\in(0,1]$ the $\alpha$-cut $C_\alpha(x^*)$ is non-empty, bounded, and a finite union of connected and closed bounded sets. (See also \cite{V0}). Grzegorzewski, \cite[Definition 2.1]{G}, instead of our 3) assumes 3$^\prime$) which demands that the characterizing  function of the fuzzy vector $x^*$ be {\em fuzzy convex}, i.e., for all $\lambda\in[0,1]$ and all $x,y\in\R^d,$ \begin{equation*}\label{wypuklosc}
\xi_{x^*}(\lambda x+(1-\lambda)y)\geq\min\left(\xi_{x^*}(x),\xi_{x^*}(y)\right).
\end{equation*}
\end{note}
\subsubsection{Fuzzy arithmetic}
We start with the Minkowski arithmetic performed on subsets of $\R^d.$
\begin{definition}
Let $A,B\subset\R^d$ and $\lambda\in\R$. Then
\begin{equation*}
\begin{aligned}
A+B:=&\{a+b:\,a\in A,\,B\in B\},\\
A\cdot B:=&\{ab:\,a\in A,\,b\in B\},\\
\lambda\cdot A:=&\{\lambda a:\,a\in A\}.
\end{aligned}
\end{equation*}
\end{definition}
The Minkowski arithmetic of sets immediately induces an arithmetic of $d$-di\-men\-sio\-nal vectors from $\mathcal F_{\text{cconv}}(\R^d)$ via their $\alpha$-cuts:
\begin{definition}
The sum $x^*\oplus y^*$ and multiplication $x^*\odot y^*$
of two fuzzy $d$-dimensional fuzzy vectors $x^*$ and $y^*$
are defined via $\alpha$-cuts as follows
\begin{equation*}
\begin{aligned}
C_\alpha(x^*\oplus y^*)=\; &C_\alpha(x^*)+C_\alpha(y^*),\\
C_\alpha(x^*\odot y^*)=\; &C_\alpha(x^*)\cdot C_\alpha(y^*).
\end{aligned}
\end{equation*}
Similarly, the multiplication of a fuzzy vector $x^*$ by a crisp real number $\lambda,$ $\lambda\odot x^*$ is defined by the equation
\begin{equation*}
C_\alpha(\lambda\odot x^*)=\lambda\cdot C_\alpha(x^*).
\end{equation*}
\end{definition}
\subsubsection{\textbf{Notation}}
Let $K(\R^d)$  denote the set of all non-empty, closed subsets of $\R^d.$ By $K_{\text{c}}(\R^d)$ ($K_{\text{conv}}(\R^d)$, resp.) we denote the non-empty space of all compact subsets of $\R^d$ (the non-empty space of all closed convex subsets of $\R^d$, resp.) and finally, $K_{\text{cconv}}(\R^d)$ is the space of all non-empty compact and convex subsets of $\R^d$.\vspace{1mm}\par\noindent
The space $\R^d$ is equipped with the classical $\ell^2$-norm $\|x\|_{\ell^2}=\left(x_1^2+\cdots+x_d^2\right)^{1\slash 2}$ and the inner product $\langle x,y\rangle =\sum_1^dx_iy_i.$

\smallskip
In order to avoid trivialities we assume throughout the paper that $d>1.$ In the case $d=1$ we would be dealing with fuzzy intervals (numbers), which require a different set of techniques.

\subsection{Support function}\label{support}
Let $C\subset K_{\text{cconv}}(\R^d).$ By  $\s^{d-1}$ we denote the unit sphere in $\R^d,$ i.e.,  $\s^{d-1}=\{x\in\R^d: \|x\|_{\ell^2}=1\}.$
\begin{definition} The {\em support function of a set} $C\subset K_{\text{cconv}}(\R^d)$ is the function  $s_C:\s^{d-1}\to\R$ defined by
\begin{equation*}
s_C(u)=\sup_{a\in C} \langle u,a\rangle,\qquad u\in \s^{d-1}.
\end{equation*}
\end{definition}
\begin{definition}\label{sfff}
Let $x^*$ be a $d$-dimensional fuzzy vector. By 3) of Definition~\ref{defxi} its $\alpha$-cuts $C_\alpha(x^*)$ belong to $K_{\text{cconv}}(\R^d)$ so we can define the {\em support function of a fuzzy vector} $x^*$ as follows
\begin{equation}\label{sf}
s_{x^*}(\alpha,u)=\sup_{a\in C_\alpha(x^*)}\langle u,a\rangle,\qquad\alpha\in(0,1]\text{ and }u\in \s^{d-1}.
\end{equation}
\end{definition}
The support function $s_{x^*}(\cdot,\cdot)$ of $x^*\in\mathcal F_{\text{cconv}}(\R^d)$ has the following properties (\cite{DKbook,Korner}):
\begin{itemize}
\item [(i)] For every $\alpha\in(0,1]$, $s_{x^*}(\alpha,\cdot):\s^{d-1}\to\R$ is a  continuous function.
\item [(ii)] The support function is positive homogeneous with respect to the $u$ variable, i.e., for all real $\lambda\geq0$ and all $\alpha\in(0,1]$, $s_{x^*}(\alpha,\lambda u)=\lambda     s_{x^*}(\alpha,u).$
\item [(iii)] For every $\alpha\in(0,1]$, $s_{x^*}(\alpha,\cdot)$ is sub-additive, i.e., for all $u,v\in \s^{d-1},$ $s_{x^*}(\alpha,u+v)\leq  s_{x^*}(\alpha,u)+s_{x^*}(\alpha,v).$
\item [(iv)] For every $u\in \s^{d-1}$ the function $s_{x^*}(\cdot,u):(0,1]\to\R$ is left continuous and non-increasing, i.e., for all $0<\alpha\leq\beta\leq 1,$ $s_{x^*}(\alpha,u)\geq s_{x^*}(\beta,u).$
\end{itemize}

The proof of the following lemma is straightforward:
\begin{lemma}\label{addsup}
For every $x^*, y^*\in\mathcal F_{\text{cconv}}(\R^d)$ and for every $\lambda\in\R,$ we have that
\begin{equation}\label{additive}
s_{x^*\oplus y^*}(\alpha,u)=s_{x^*}(\alpha,u)+s_{y^*}(\alpha,u)
\end{equation}
and
\begin{equation}\label{pseudolin}
s_{\lambda\odot x^*}(\alpha,u)=|\lambda| s_{\sign{(\lambda)} \odot x^*}(\alpha,u).
\end{equation}
\end{lemma}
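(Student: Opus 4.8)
The plan is to prove both identities directly from the definition $s_{x^*}(\alpha,u)=\sup_{a\in C_\alpha(x^*)}\langle u,a\rangle$ in \eqref{sf}, together with the definitions of $\oplus$ and $\odot$ through $\alpha$-cuts; no convexity or compactness beyond the well-definedness of the suprema enters, so the argument is purely about how the supremum interacts with Minkowski sums and scalar multiples. In effect one is recording the classical fact that a support function is additive and positively homogeneous in its set argument, transcribed to the fuzzy setting via $\alpha$-cuts.

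For \eqref{additive} I would start from $C_\alpha(x^*\oplus y^*)=C_\alpha(x^*)+C_\alpha(y^*)$. Every element of this Minkowski sum has the form $a+b$ with $a\in C_\alpha(x^*)$ and $b\in C_\alpha(y^*)$, and conversely every such $a+b$ lies in the sum. By linearity of $\langle u,\cdot\rangle$ we have $\langle u,a+b\rangle=\langle u,a\rangle+\langle u,b\rangle$, so the supremum over the product set decouples:
\begin{equation*}
s_{x^*\oplus y^*}(\alpha,u)=\sup_{a,b}\bigl(\langle u,a\rangle+\langle u,b\rangle\bigr)=\sup_a\langle u,a\rangle+\sup_b\langle u,b\rangle,
\end{equation*}
which is exactly $s_{x^*}(\alpha,u)+s_{y^*}(\alpha,u)$.

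For \eqref{pseudolin} I would use $C_\alpha(\lambda\odot x^*)=\lambda\cdot C_\alpha(x^*)$ and split on the sign of $\lambda$. When $\lambda\geq0$ the scalar factors out of the supremum, $\sup_{a}\langle u,\lambda a\rangle=\lambda\sup_a\langle u,a\rangle$, and since $\sign(\lambda)\odot x^*=x^*$ and $|\lambda|=\lambda$ the claimed equality is immediate. The only step requiring care is $\lambda<0$: here multiplication by $\lambda$ reverses the supremum into an infimum, $\sup_a\langle u,\lambda a\rangle=\lambda\inf_a\langle u,a\rangle=|\lambda|\sup_a\langle u,-a\rangle$. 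I would then identify $\sup_a\langle u,-a\rangle$ with $s_{(-1)\odot x^*}(\alpha,u)$ via $C_\alpha((-1)\odot x^*)=-C_\alpha(x^*)$, and since $\sign(\lambda)=-1$ this is precisely $|\lambda|\,s_{\sign(\lambda)\odot x^*}(\alpha,u)$. This sign bookkeeping in the negative case is the only genuine subtlety; the rest is a routine unwinding of definitions.
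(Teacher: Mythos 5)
Your proof is correct and is precisely the routine verification the paper has in mind; the paper offers no argument at all, simply declaring the lemma straightforward, and your unwinding of the definitions (decoupling the supremum over the Minkowski sum, and the sup-to-inf flip for $\lambda<0$) is the standard one. The only cosmetic point is the case $\lambda=0$, where $\sign(\lambda)\odot x^*$ is the crisp zero vector rather than $x^*$, but both sides of \eqref{pseudolin} vanish there, so the identity still holds.
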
\vspace{1mm}
It turns out that integrable functions with properties (i)-(iv) completely characterize the elements of $\mathcal  F_{\text{cconv}}(\R^d).$ Specifically, we have the following result:
\begin{theorem}\label{inverse}
For every Lebesgue integrable function $f\in L^1\left((0,1]\times \s^{d-1}\right)$ with properties (i)-(iv) there exists exactly one $d$-dimensional fuzzy vector $x^*\in\mathcal F_{\text{cconv}}(\R^d)$ such that for every $\alpha\in(0,1]$,
\begin{equation*}
C_\alpha(x^*)=\{x\in\R^d:\,\langle u,x\rangle\leq f(\alpha,u)\text{ for every }u\in \s^{d-1}\}
\end{equation*}
and $s_{x^*}=f.$
\end{theorem}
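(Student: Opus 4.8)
The plan is to reduce the statement to the classical duality between finite sublinear functions and non-empty compact convex bodies, applied separately at each level $\alpha$, and then to glue the resulting bodies into a single characterizing function by means of the standard representation theorem for fuzzy sets via their level sets (Negoita--Ralescu). Throughout I identify $f(\alpha,\cdot)$ with its positively homogeneous extension to all of $\R^d$, which by properties (ii) and (iii) is sublinear (positively homogeneous and subadditive) and which is finite and continuous on $\s^{d-1}$ by (i).

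\emph{Per-level reconstruction.} Fix $\alpha\in(0,1]$ and put $C_\alpha:=\{x\in\R^d:\langle u,x\rangle\le f(\alpha,u)\ \text{for all }u\in\s^{d-1}\}$. As an intersection of closed half-spaces, $C_\alpha$ is closed and convex, and continuity of $f(\alpha,\cdot)$ on the compact sphere gives a bound $f(\alpha,u)\le M_\alpha$, whence $\|x\|_{\ell^2}\le M_\alpha$ for $x\in C_\alpha$, so $C_\alpha$ is bounded and hence compact. The classical support-function theorem (conjugacy of finite sublinear functions and compact convex bodies) then shows that $C_\alpha$ is non-empty and that its support function equals $f(\alpha,\cdot)$; this is exactly the one-level version of what must be proved.

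\emph{Assembling the fuzzy vector and uniqueness.} Property (iv) gives $f(\alpha,\cdot)\ge f(\beta,\cdot)$ pointwise whenever $\alpha\le\beta$, hence $C_\beta\subseteq C_\alpha$, so the family $\{C_\alpha\}$ is nested and decreasing in $\alpha$. If $\alpha_n\uparrow\alpha$, then left continuity in (iv) yields $f(\alpha_n,u)\to f(\alpha,u)$ for every $u$, and passing to the limit in the defining inequalities gives $\bigcap_n C_{\alpha_n}=C_\alpha$. These are precisely the hypotheses of the representation theorem, so
\[
\xi_{x^*}(x):=\sup\{\alpha\in(0,1]:x\in C_\alpha\},\qquad \sup\emptyset:=0,
\]
defines $\xi_{x^*}:\R^d\to[0,1]$ whose $\alpha$-cut is exactly $C_\alpha$ for each $\alpha\in(0,1]$; its cuts are non-empty compact convex and $s_{x^*}=f$ by construction. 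Uniqueness is then immediate: two elements of $\mathcal F_{\text{cconv}}(\R^d)$ with support function $f$ have, at each level, compact convex cuts sharing the support function $f(\alpha,\cdot)$, so by the injectivity half of convex duality these cuts coincide, and recovering the characterizing function as the supremum above forces the two fuzzy vectors to be equal.

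The main obstacle I anticipate is the remaining clause of Definition~\ref{defxi}, namely boundedness (hence compactness) of the support $\supp\xi_{x^*}=C_0=\o{\bigcup_{\alpha>0}C_\alpha}$: the per-level bounds $M_\alpha$ need not stay bounded as $\alpha\downarrow 0$, and a datum such as $f(\alpha,u)=\alpha^{-1/2}$ is $L^1$ yet produces balls $C_\alpha$ of radius $\alpha^{-1/2}$ and an unbounded union. I would therefore need to isolate finiteness of $\o f(u):=\lim_{\alpha\to0^+}f(\alpha,u)=\sup_{\alpha>0}f(\alpha,u)$, uniformly in $u$, from the combination of monotonicity, left continuity and integrability of $f$; once $\o f$ is seen to be finite, sublinear and continuous, it is the support function of a compact convex body that one identifies with $C_0$, simultaneously delivering boundedness, compactness and convexity of the support. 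This $\alpha\to0^+$ analysis is the delicate step; the verifications that $\xi_{x^*}$ has the prescribed cuts and that $s_{x^*}=f$ are then routine consequences of the representation theorem and the one-level reconstruction.
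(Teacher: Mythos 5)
The paper does not actually prove Theorem~\ref{inverse}; it only points to \cite{KN}, so there is no internal argument to compare yours against. Your architecture --- extend $f(\alpha,\cdot)$ positively homogeneously, invoke the conjugacy between finite sublinear functions and non-empty compact convex sets at each fixed level $\alpha$, check that the family $\{C_\alpha\}$ is nested and stable under intersections along $\alpha_n\uparrow\alpha$, apply the Negoita--Ralescu representation theorem to produce $\xi_{x^*}$ with the prescribed cuts, and get uniqueness from injectivity of the support-function correspondence --- is the standard route and essentially the one in the cited literature. The per-level steps and the gluing step are correct as you describe them.

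The issue you flag at $\alpha\to 0^+$ is, however, not a ``delicate step'' that further work will close: your own example shows the statement is false as written. The function $f(\alpha,u)=\alpha^{-1/2}$ satisfies (i)--(iv) (for each fixed $\alpha$ it is the support function of the closed ball of radius $\alpha^{-1/2}$, and $\alpha\mapsto\alpha^{-1/2}$ is continuous and non-increasing) and lies in $L^1\left((0,1]\times\s^{d-1}\right)$, yet the membership function it generates is $\min\left(1,\|x\|_{\ell^2}^{-2}\right)$, whose support is all of $\R^d$; this violates condition 2) of Definition~\ref{defxi} and the compactness of $C_0$ in condition 3), so no $x^*\in\mathcal F_{\text{cconv}}(\R^d)$ can have $s_{x^*}=f$. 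Integrability near $\alpha=0$ simply does not control the growth of the monotone limit $\o f(u)=\sup_{\alpha>0}f(\alpha,u)$, so the finiteness you hope to ``isolate'' from monotonicity, left continuity and integrability is not there to be isolated. The theorem needs an additional hypothesis --- for instance $f\in L^\infty$, or finiteness of $\o f(u)$ for every $u$ (in which case $\o f$, being a finite sublinear function on $\R^d$, is automatically continuous, hence bounded on $\s^{d-1}$, and is the support function of the compact convex set one should take as $C_0$) --- or else the target space must be enlarged to fuzzy vectors without the bounded-support requirement. With such a hypothesis added, your proof goes through exactly as outlined.
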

For a proof see \cite{KN}.
\subsection{Metrics on the space of fuzzy vectors}
The {\em Hausdorff metric} $d_H$ on the space  $K_{\text{cconv}}(\R^d)$ is given by the following  formula:
\begin{equation*}
d_H(A,B)=\max\left\{\sup_{a\in A}\inf_{b\in B}\|a-b\|_{\ell^2},\;\sup_{b\in B}\inf_{a\in A}\|a-b\|_{\ell^2}\right\},
\end{equation*}
where $A,B\in K_{\text{cconv}}(\R^d)$.

\smallskip
Based on the Hausdorff metric $d_H$ one can construct $L^p$-metrics, $1\leq p\leq\infty,$ on $\mathcal F_{\text{cconv}}(\R^d)$ via $\alpha$-cuts:\par\vspace{1mm}
For $1\leq p<\infty:$
\begin{equation}\label{dp}
d_p(x^*,y^*)=
\left(\int_0^1d_H\bigl(C_\alpha(x^*),C_\alpha(y^*)\bigr)^p
d\alpha\right)^{1\slash p},
\end{equation}
\indent and for $p=\infty$:
\begin{equation}\label{d8}
d_\infty(x^*,y^*)=\sup_{\alpha\in [0,1]}d_H\bigl(C_\alpha(x^*),C_\alpha(y^*)\bigr),\;x^*,
y^*\in\mathcal F_{\text{cconv}}(\R^d).
\end{equation}

The space $\mathcal F_{\text{cconv}}(\R^d)$ equipped with $d_\infty$ is a complete metric space (but not separable). But for $1\leq p<\infty$ the metric space $(\mathcal F_{\text{cconv}}(\R^d),\,d_p)$ is both complete and separable (see \cite{DK1,DK2}).

\smallskip
\begin{remark}
The Hausdorff distance between two vectors $x^*,y^*\in\mathcal F_{\text{cconv}}(\R^d)$ may be expressed by means of their support functions. Namely,
\begin{equation*}
d_H(x^*,y^*)=\sup_{u\in \s^{d-1}}|s_{x^*}(u)-s_{y^*}(u)|,\qquad x^*,y^*\in\mathcal F_{\text{cconv}}(\R^d).
\end{equation*}
(See \cite[p. 243]{DK1}.)
\end{remark}

\subsection{The embedding theorem for $\mathcal F_{\text{cconv}}(\R^d)$}
For $1\leq p\leq\infty,$ there exists an isometrically isomorphic embedding of $\left(\mathcal F_{\text{cconv}}(\R^d),d_p\right)$ into the Banach space $L^p\bigl((0,1]\times\s^{d-1},\,dxd\lambda\bigr),$ where $\lambda$ is the (normalized) Lebesgue measure on $\s^{d-1},$ onto a positive cone $\mathcal H\subset L^p\left((0,1]\times\s^{d-1}\right).$ The space $L^p\left((0,1]\times\s^{d-1}\right)$ is equipped with a standard $L^p$-norm (and metric). Namely,
for $1\leq p<\infty$ the norm is given by:
\begin{equation*}
\|f\|_p=\left(\int_0^1\int_{\s^{d-1}}|f(x,u)|^pdx d\lambda(u)\right)^{1\slash p},
\end{equation*}
and the corresponding distance function is
\begin{equation}\label{rhop}
\rho_p(f,g)=\|f-g\|_p \,.
\end{equation}
For $p=\infty,$
$$\|f\|_\infty=\sup_{\alpha\in(0,1]}\sup_{u\in \s^{d-1}}|
f(\alpha,u)|\,,$$
and the corresponding metric is
\begin{equation*}
\rho_\infty(f,g)=\|f-g\|_\infty.
\end{equation*}

The embedding is defined as follows:
\begin{theorem}\label{embthm}
Let
\begin{equation*}
j:\mathcal F_{\text{cconv}}(\R^d)\to L^p\left((0,1]\times \s^{d-1}\right)
\end{equation*}
be defined by
\begin{equation}\label{j}
j(x^*)\mapsto s_{x^*}(\cdot,\cdot).
\end{equation}

\smallskip\noindent
The mapping $j$ is {\em positive linear}\footnote{Positive linearity of $j$ \eqref{al} follows from Lemma~\ref{addsup}.},i.e., for all non-negative real numbers $\lambda,\mu$ we have
\begin{equation}\label{al}
j(\lambda\odot x^*+\mu\odot y^*)=\lambda j(x^*)+\mu j(y^*),\text{ for }\lambda,\mu\geq 0.
\end{equation}
The $j$-map is one-to-one and onto its image $j\left(\mathcal F_{\text{cconv}}(\R^d)\right)$ which is a closed and convex cone in $L^p\left((0,1]\times \s^{d-1}\right).$
Moreover, for all $1\leq p\leq\infty,$ the mapping $j$ is an isometry,
\begin{equation}\label{izom}
d_p(x^*,y^*)=\rho_p(j(x^*),j(y^*)).
\end{equation}
\end{theorem}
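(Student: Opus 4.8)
The plan is to establish the three asserted features of $j$ in turn — positive linearity, bijectivity onto a closed convex cone, and the isometry \eqref{izom} — using Lemma~\ref{addsup}, the reconstruction Theorem~\ref{inverse}, and the Remark identifying the Hausdorff distance with the sup-distance of support functions.

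Positive linearity is immediate from Lemma~\ref{addsup}: for $\lambda,\mu\ge 0$ we have $\sign(\lambda)=\sign(\mu)=1$, so \eqref{pseudolin} reduces to $s_{\lambda\odot x^*}=\lambda\,s_{x^*}$, and together with the additivity \eqref{additive} this gives $j(\lambda\odot x^*\oplus\mu\odot y^*)=\lambda\,s_{x^*}+\mu\,s_{y^*}=\lambda j(x^*)+\mu j(y^*)$, which is \eqref{al}. For bijectivity onto the image I would invoke Theorem~\ref{inverse}: it asserts that every integrable $f$ with properties (i)--(iv) is the support function of \emph{exactly one} $x^*\in\mathcal F_{\text{cconv}}(\R^d)$. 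The word ``exactly'' supplies injectivity of $j$ directly, while surjectivity onto the set $\mathcal H$ of such functions is the existence half of the same theorem. That $\mathcal H$ is a convex cone is then a formal check that each of (i)--(iv) is preserved under addition and under multiplication by nonnegative scalars (continuity and positive homogeneity in $u$, subadditivity in $u$, and left-continuous monotonicity in $\alpha$ are all stable under these operations).

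The isometry is obtained level by level. By the Remark, for each fixed $\alpha$ one has $d_H\bigl(C_\alpha(x^*),C_\alpha(y^*)\bigr)=\sup_{u\in\s^{d-1}}|s_{x^*}(\alpha,u)-s_{y^*}(\alpha,u)|$, so the integrand in the definition \eqref{dp} of $d_p$ (resp.\ the quantity under the outer supremum in \eqref{d8}) coincides with that of $\rho_p\bigl(j(x^*),j(y^*)\bigr)$ from \eqref{rhop}. Integrating the $p$-th powers over $\alpha\in(0,1]$ for $1\le p<\infty$, and taking suprema over $\alpha$ for $p=\infty$, yields \eqref{izom}; property (iv) together with the boundedness of the supports guarantees the measurability and finiteness in $\alpha$ needed to write these integrals.

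The subtlest point is the closedness of the image $\mathcal H=j(\mathcal F_{\text{cconv}}(\R^d))$. A direct approach would take an $L^p$-convergent sequence $s_{x_n^*}\to f$, pass to an a.e.-convergent subsequence, and verify that the limit still satisfies (i)--(iv) so that Theorem~\ref{inverse} returns a preimage; confirming that continuity, homogeneity, subadditivity, and left-continuous monotonicity survive almost-everywhere limits is the delicate part. I would instead bypass this: since the isometry \eqref{izom} is already in hand and $(\mathcal F_{\text{cconv}}(\R^d),d_p)$ is complete (as recorded after \eqref{d8}), the image $\mathcal H$ is a complete subset of $L^p\bigl((0,1]\times\s^{d-1}\bigr)$ and therefore closed. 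This reduces the whole statement to the three cited ingredients plus one formal verification of the cone property.
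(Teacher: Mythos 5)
The paper itself offers no argument for this theorem---its ``proof'' is the single line ``see \cite{K}, \cite{V} and the literature cited therein''---so you are supplying a proof where the authors supply a citation. Most of what you write is sound: positive linearity from Lemma~\ref{addsup}, injectivity and the identification of the image with the set of integrable functions satisfying (i)--(iv) from Theorem~\ref{inverse}, the formal verification that this set is a convex cone, and the completeness-plus-isometry route to closedness (which is indeed a clean way to avoid checking that (i)--(iv) survive a.e.\ limits).

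The genuine gap is in the isometry step, for $1\le p<\infty$. The Hausdorff distance of the $\alpha$-cuts equals $\sup_{u\in\s^{d-1}}|s_{x^*}(\alpha,u)-s_{y^*}(\alpha,u)|$, i.e.\ the $L^\infty(\s^{d-1})$-norm in $u$ of the difference of support functions, \emph{not} its $L^p(d\lambda)$-norm; so the integrand of \eqref{dp} does not coincide with the inner integral in \eqref{rhop}, contrary to what you assert. With the paper's definition, $d_p$ is the mixed norm $L^p_\alpha L^\infty_u$ while $\rho_p(j(x^*),j(y^*))$ is the full $L^p_{\alpha,u}$-norm, and since $\lambda$ is normalized one only gets the inequality $\rho_p(j(x^*),j(y^*))\le d_p(x^*,y^*)$, which is strict in general: for $d=2$ take the crisp vectors $x^*$ and $y^*$ whose characterizing functions are $\one_{\{0\}}$ and $\one_{\{(1,0)\}}$; then $d_H(C_\alpha(x^*),C_\alpha(y^*))=1$ for every $\alpha$, so $d_p(x^*,y^*)=1$, whereas $s_{x^*}\equiv 0$ and $s_{y^*}(\alpha,u)=u_1$, so $\rho_p(j(x^*),j(y^*))=\bigl(\int_{\s^{1}}|u_1|^p\,d\lambda(u)\bigr)^{1/p}<1$. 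Your level-by-level argument is therefore valid only for $p=\infty$. (The root cause is an inconsistency in the statement itself: the references cited define the metric on $\mathcal F_{\text{cconv}}(\R^d)$ directly as the $L^p$-distance of support functions, for which \eqref{izom} is tautological, and that metric does not agree with \eqref{dp}; but your claim that the two integrands coincide is false as written and cannot stand.) Note also that your closedness argument leans on \eqref{izom}, so as written it too is secured only for $p=\infty$, or for finite $p$ after the metric $d_p$ is redefined via support functions.
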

\begin{proof}
For the proof see \cite[Theorem 4.3]{K}, \cite[p. 158]{V} and the literature cited therein.
\end{proof}
\begin{note}
 By theorems~\ref{inverse} and \ref{embthm} we may identify the space of $d$-dimensional fuzzy vectors with a certain cone $\mathcal H$ of real functions defined on the product space $(0,1]\times\s^{d-1}.$
\end{note}
\subsection{Normal random variables and Gaussian processes with values in the space of  $d$-dimensional fuzzy vectors. Independence}

\begin{definition}
By a fuzzy random variable in our setting we understand a measurable function $X^*:(\Omega,\mathcal F,\p)\to (\mathcal F_{\text{cconv}}(\R^d),\mathcal B) =:S.$\par
\noindent Here $(\Omega,\mathcal F,\p)$ is some probability space and $S$ is the space of fuzzy vectors, equipped with a metric $d_p$, for some $p\in[1,\infty].$ The associated Borel $\sigma$-field $\mathcal B$ is generated by open balls which are open in the chosen metric $d_p$.

As usual, $X^*$ is termed an $(\mathcal F - \mathcal B)$-measurable function iff for every $B\in\mathcal B$ the inverse image ${X^*}^{-1}(B):=\{\omega\in\Omega:X^*(\omega)\in B\}$ belongs to the $\sigma$-field $\mathcal F.$
\end{definition}

\noindent
The notion of independence of fuzzy random variables transfers from the classical case verbatim:
\begin{definition}
Fuzzy random variables $X^*$ and $Y^*$ are independent if and only if $\p(X^*\in B_1\text{ and }Y^*\in B_2)=\p(X^*\in B_1)\cdot\p(Y^*\in B_2),$ for all $B_1,B_2$ belonging to the pertinent $\sigma$-field $\mathcal{B}.$
\end{definition}

The following Lemma is key to our investigations:
\begin{lemma}\label{i}
Let $\{X^*_n\}_{n=1}^\infty$ be a sequence of mutually independent fuzzy random variables with values in the metric space $\left(\mathcal F_{\text{cconv}}(\R^d),d_p\right)$ (with the $\sigma$-field $\mathcal B$ generated by open balls in the metric $d_p).$ \par
Then the sequence of support functions
$$\left\{s_{X^*_n}(\cdot,\cdot)\}_{n=1}^\infty = \{j({X^*_n})(\cdot,\cdot)\right\}_{n=1}^\infty$$ embedded in the metric space $S=\left(L^p((0,1]\times\s^{d-1},\rho_p\right)$ (with the Borel $\sigma$-field $\mathcal S$ generated by the metric $\rho_p$) and evaluated at arbitrary point $(\alpha,u)\in (0,1]\times\s^{d-1}$, i.e.
\begin{equation*}
s_{X^*_n}(\alpha,u)(\omega)=s_{X^*_n(\omega)}(\alpha,u)
\end{equation*}

\noindent consists of mutually independent real random variables.
\end{lemma}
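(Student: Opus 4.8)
The plan is to deduce the mutual independence of the real random variables $s_{X^*_n}(\alpha,u)$ from that of the fuzzy random variables $X^*_n$ via the standard measure-theoretic fact that independence is preserved when one applies measurable maps coordinatewise. Concretely, for a fixed evaluation point $(\alpha,u)\in(0,1]\times\s^{d-1}$ I would introduce the evaluation functional
\begin{equation*}
\Phi_{\alpha,u}:\mathcal F_{\text{cconv}}(\R^d)\to\R,\qquad \Phi_{\alpha,u}(x^*)=s_{x^*}(\alpha,u),
\end{equation*}
so that $s_{X^*_n}(\alpha,u)(\omega)=\Phi_{\alpha,u}\bigl(X^*_n(\omega)\bigr)$, i.e. $s_{X^*_n}(\alpha,u)=\Phi_{\alpha,u}\circ X^*_n$. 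Since each $X^*_n$ is $(\mathcal F-\mathcal B)$-measurable, it suffices to show that $\Phi_{\alpha,u}$ is $(\mathcal B-\mathcal B(\R))$-measurable; then each $s_{X^*_n}(\alpha,u)$ is a genuine real random variable, and for every finite index set $\{n_1,\dots,n_k\}$ the vector $\bigl(\Phi_{\alpha,u}(X^*_{n_1}),\dots,\Phi_{\alpha,u}(X^*_{n_k})\bigr)$ inherits mutual independence from $(X^*_{n_1},\dots,X^*_{n_k})$, because the joint law of the latter is a product measure and its pushforward under the coordinatewise map $\Phi_{\alpha,u}$ is again a product. Thus the whole lemma reduces to the single measurability claim for $\Phi_{\alpha,u}$.

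The easy case is $p=\infty$. Combining the definition \eqref{d8} of $d_\infty$ as the supremum over $\alpha$ of the Hausdorff distances of the $\alpha$-cuts with the representation of $d_H$ through support functions (the Remark following \eqref{d8}), one obtains $|\Phi_{\alpha,u}(x^*)-\Phi_{\alpha,u}(y^*)|=|s_{x^*}(\alpha,u)-s_{y^*}(\alpha,u)|\le d_\infty(x^*,y^*)$, so $\Phi_{\alpha,u}$ is $1$-Lipschitz, hence continuous, hence Borel measurable.

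The main obstacle is the range $1\le p<\infty$: here $\rho_p$, and via the isometry \eqref{izom} also $d_p$, is an integral metric, so point evaluation is not continuous and the Lipschitz argument fails. My plan is to recover the point value $s_{x^*}(\alpha_0,u_0)$ as a countable iterated limit of integral pairings, which \emph{are} $d_p$-continuous. Indeed, since $j$ is an isometry (Theorem~\ref{embthm}), the map $x^*\mapsto\int_{(0,1]\times\s^{d-1}} s_{x^*}\,g\,d\alpha\,d\lambda$ is $d_p$-continuous for every fixed $g$ in the conjugate space $L^{q}$, being the composition of the continuous $j$ with the bounded linear functional $f\mapsto\int fg$ on $L^p$. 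Taking $g$ to be the normalized indicators $g_{k,m}=c_{k,m}\,\mathbf 1_{[\alpha_0-1/k,\,\alpha_0]\times B_{1/m}(u_0)}$, where $B_{1/m}(u_0)$ is a spherical cap about $u_0$ and $c_{k,m}$ the reciprocal of the measure of the indicated set, these pairings are exactly the averages of $s_{x^*}$ over small one-sided neighborhoods of $(\alpha_0,u_0)$. By property (i) (continuity in $u$) and property (iv) (left continuity and monotonicity in $\alpha$) of the support function, these averages converge to $s_{x^*}(\alpha_0,u_0)$ as first $m\to\infty$ and then $k\to\infty$. Hence $\Phi_{\alpha_0,u_0}$ is a pointwise iterated limit of $d_p$-continuous functions and is therefore $\mathcal B$-measurable.

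With the measurability of $\Phi_{\alpha,u}$ in hand for every $p\in[1,\infty]$, the coordinatewise-transformation principle of the first paragraph yields that $\{s_{X^*_n}(\alpha,u)\}_{n=1}^\infty$ is a sequence of mutually independent real random variables. I expect the only point requiring real care to be the $u$-averaging step of the hard case, where one must verify that the spherical caps carry positive $\lambda$-measure and that the continuity afforded by property (i) suffices to pass to the limit; this is bookkeeping rather than a genuine difficulty.
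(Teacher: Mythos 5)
Your proposal is correct and follows the same overall route as the paper's proof: both factor the point evaluation as a measurable map applied coordinatewise to the $X^*_n$ (the paper writes it as $f_{(\alpha,u)}\circ j$ with $f_{(\alpha,u)}(\varphi)=\varphi(\alpha,u)$ on the image cone in $L^p$, you write it as $\Phi_{\alpha,u}$ directly on $\mathcal F_{\text{cconv}}(\R^d)$; these are the same map), and both then invoke preservation of independence under measurable transformations of the coordinates. The genuine difference is that the paper merely \emph{asserts} the $(\mathcal S-\mathcal R)$-measurability of the evaluation functional, remarking only that properties (ii) and (iv) of section~\ref{support} make it well defined on the cone of support functions, whereas you actually prove the measurability: trivially for $p=\infty$ via the $1$-Lipschitz bound coming from \eqref{d8} and the support-function representation of $d_H$, and for $1\le p<\infty$ by exhibiting $\Phi_{\alpha_0,u_0}$ as an iterated pointwise limit of $d_p$-continuous integral pairings against normalized indicators of shrinking one-sided neighborhoods, using property (i) for the $u$-averaging and property (iv) for the $\alpha$-averaging (boundedness of the support function of a compactly supported fuzzy vector justifies the interchange of limit and integral). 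Since point evaluation is \emph{not} continuous for the integral metrics $\rho_p$ with $p<\infty$, this is exactly the step the paper leaves unjustified, and your approximation argument closes that gap; the only residual caveat is that for $p=\infty$ the ball $\sigma$-field $\mathcal B$ may be strictly smaller than the Borel $\sigma$-field (the space is non-separable there), but the main theorem only uses $p\in[1,\infty)$, where this issue does not arise.
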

\begin{proof}
Take $m\not=n.$ Let $f_{(\alpha,u)}:S\to\R,$ $f_{(\alpha,u)}(\varphi)=\varphi(\alpha,u).$ If $\varphi=s_{X^*_j}$ for some $j,$ then by properties (ii) and (iv) in section~\ref{support} of the support function, it follows that $f_{(\alpha,u)}$ is well defined. Therefore, for two Borel sets $B_1,B_2\subset\R$ from the Borel $\sigma$-field $\mathcal R,$
\begin{multline*}
\p\left(s_{X^*_m}(\alpha,u)\in B_1\text{ and }s_{X^*_n}(\alpha,u)\in B_2\right)\\=\p\left(f_{(\alpha,u)}\circ j(X^*_m)\in B_1\text{ and }f_{(\alpha,u)}\circ j(X^*_n)\in B_2\right)\\=\p\left(X^*_m\in (f_{(\alpha,u)}\circ j)^{-1}(B_1)\text{ and }X^*_n\in (f_{(\alpha,u)}\circ j)^{-1}(B_2)\right).
\end{multline*}
Since $j$ is $(\mathcal B - \mathcal S)$-measurable and $f_{(\alpha,u)}$ is $(\mathcal S - \mathcal R)$-measurable it follows that $f_{(\alpha,u)}\circ j$ is $(\mathcal B - \mathcal R)$-measurable. Hence, for $i=1,2$ the respective inverse images $(f_{(\alpha,u)}\circ j)^{-1}(B_i)\in\mathcal B.$ Thus by independence of $X^*_m,X^*_n$ we get
\begin{multline*}
\p\left(s_{X^*_m}(\alpha,u)\in B_1\text{ and }s_{X^*_n}(\alpha,u)\in B_2\right)\\=\p\left(X^*_m\in (f_{(\alpha,u)}\circ j)^{-1}(B_1)\right)\p\left(X^*_n\in (f_{(\alpha,u)}\circ j)^{-1}(B_2)\right).
\end{multline*}
Thus,
\begin{multline*}
\p\left(s_{X^*_m}(\alpha,u)\in B_1\text{ and }s_{X^*_n}(\alpha,u)\in B_2\right)\\=\p\left( (f_{(\alpha,u)}\circ j)(X^*_m)\in B_1\right)\p\left(X^*_n\in (f_{(\alpha,u)}\circ j)(X^*_n)\in B_2\right)\\
\p\left(s_{X^*_m}(\alpha,u)\in B_1\right)\p\left(s_{X^*_n}(\alpha,u)\in B_2\right).
\end{multline*}
\end{proof}
To recapitulate: by the embedding theorem (Theorem~\ref{embthm}) a fuzzy random variable $X^*(\cdot)$ with values in $\mathcal F_{\text{cconv}}(\R^d)$ can be identified with a collection of real-valued random variables $s_{X^*}(\alpha,u)(\cdot)$, $(\alpha,u) \in$ (the Banach space) $L^p\left((0,1]\times \s^{d-1}\right)$. The same identification serves to define the notion of normality of a fuzzy random variable.
\subsection{Expectation and variance of fuzzy random variables}
There are many definitions of expectation and variance of fuzzy random variables with values in $\mathcal F_{\text{cconv}}(\R^d).$ (See \cite{V} and references therein.)
Below we give the definition and some properties of the Bochner expectation:

\subsubsection{\textbf{Bochner expectation}}
\begin{definition}\label{dbochner}
Let ${X^*}$ be a $d$-dimensional fuzzy variable. The {\em Bochner expectation} $\e^B {X^*}$ of ${X^*}$ is defined via its associated support function,
\begin{equation}\label{be}
s_{\,\e^B {X^*}}(\alpha,u)=\e\,s_{X^*}(\alpha,u),\text{for all }(\alpha,u).
\end{equation}
\end{definition}
The Bochner expectation is well defined since for every random variable ${X^*}$ with values in $\mathcal F_{\text{cconv}}(\R^d)$ and for $\alpha\in(0,1],$ and $u\in\s^{d-1}$ fixed, the support function $s_{X^*}(\alpha,u)$ is a real random variable. Hence the expectation on the right is the classical expectation of a real random variable.\vspace{1mm}

The Bochner expectation is a fuzzy vector the characterizing function of which can be reconstructed from equality \eqref{be} by  Theorem~\ref{inverse}.
\begin{lemma}\label{econv0}
Let ${X^*}$ be a random variable with values in $\mathcal F_{\text{cconv}}(\R^d).$ Then the Bochner expectation also belongs to $\mathcal F_{\text{cconv}}(\R^d).$
\end{lemma}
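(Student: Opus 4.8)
The plan is to realize $s_{\,\e^B X^*}$ as a function meeting the hypotheses of Theorem~\ref{inverse} and then to read off the fuzzy vector that theorem produces. Put $f(\alpha,u):=\e\,s_{X^*}(\alpha,u)$, the right-hand side of the defining relation~\eqref{be}. It suffices to check that $f\in L^1\!\left((0,1]\times\s^{d-1}\right)$ and that $f$ has properties (i)--(iv) of Section~\ref{support}; Theorem~\ref{inverse} then yields a unique $x^*\in\mathcal F_{\text{cconv}}(\R^d)$ with $s_{x^*}=f$, and this $x^*$ is exactly $\e^B X^*$, so the latter lies in $\mathcal F_{\text{cconv}}(\R^d)$.

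Three of the four properties descend directly from the pointwise properties of each realization $s_{X^*(\omega)}$ via the linearity and monotonicity of expectation. For (ii), positive homogeneity gives $f(\alpha,\lambda u)=\e[\lambda\,s_{X^*}(\alpha,u)]=\lambda f(\alpha,u)$ for $\lambda\geq0$; for (iii), subadditivity of each realization together with monotonicity of $\e$ gives $f(\alpha,u+v)\leq\e[s_{X^*}(\alpha,u)+s_{X^*}(\alpha,v)]=f(\alpha,u)+f(\alpha,v)$; and the monotone half of (iv) follows because $0<\alpha\leq\beta$ forces $s_{X^*}(\alpha,u)\geq s_{X^*}(\beta,u)$ pointwise, an inequality preserved by $\e$.

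The genuinely delicate points are the left-continuity in $\alpha$ completing (iv), the continuity in $u$ of property (i), and the integrability of $f$, all of which require interchanging expectation with a limit; this is the main obstacle. The interchange is controlled by the radius $R(\omega):=\sup_{a\in C_0(X^*(\omega))}\|a\|_{\ell^2}$ of the support: since $C_\alpha(X^*(\omega))\subseteq C_0(X^*(\omega))$ for every $\alpha>0$, one has the uniform bound $|s_{X^*}(\alpha,u)|\leq R$ on $(0,1]\times\s^{d-1}$, together with the Lipschitz estimate $|s_{X^*}(\alpha,u)-s_{X^*}(\alpha,v)|\leq R\,\|u-v\|_{\ell^2}$. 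Under the integrability hypothesis $\e R<\infty$ --- which is precisely what makes the Bochner expectation meaningful, and which is strictly stronger than the pointwise existence of each $\e\,s_{X^*}(\alpha,u)$ --- dominated convergence turns the pointwise continuity of $u\mapsto s_{X^*(\omega)}(\alpha,u)$ into continuity of $f(\alpha,\cdot)$, establishing (i). For left-continuity I would take $\alpha_n\uparrow\alpha$ and use that $s_{X^*(\omega)}(\alpha_n,u)$ decreases to $s_{X^*(\omega)}(\alpha,u)$, so monotone convergence gives $f(\alpha_n,u)\to f(\alpha,u)$. Finally, the same uniform bound and Tonelli's theorem yield $\int_0^1\!\int_{\s^{d-1}}|f|\,d\lambda\,d\alpha\leq\e R<\infty$, so $f\in L^1$.

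With (i)--(iv) and integrability in hand, Theorem~\ref{inverse} supplies the required fuzzy vector and, by its uniqueness clause, guarantees that $\e^B X^*$ is well-defined as an element of $\mathcal F_{\text{cconv}}(\R^d)$, which completes the argument.
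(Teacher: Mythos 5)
Your proof is correct, but it cannot be compared step-by-step with the paper's, because the paper offers no argument at all for Lemma~\ref{econv0} --- it simply points to \cite{Vit,Korner-b}. Your route is the natural one given the paper's own toolkit: show that $f(\alpha,u)=\e\,s_{X^*}(\alpha,u)$ satisfies (i)--(iv) and lies in $L^1\left((0,1]\times\s^{d-1}\right)$, then invoke Theorem~\ref{inverse}. The verifications are sound: (ii), (iii) and the monotone half of (iv) follow from linearity and monotonicity of $\e$, and the limit interchanges for (i), for left-continuity in (iv), and for integrability are all legitimately controlled by the envelope $R(\omega)=\sup_{a\in C_0(X^*(\omega))}\|a\|_{\ell^2}$ together with the Lipschitz bound $|s_{X^*}(\alpha,u)-s_{X^*}(\alpha,v)|\leq R\,\|u-v\|_{\ell^2}$ (for the decreasing sequence in (iv), dominated convergence with dominating function $R$ is the cleaner citation than monotone convergence, but both work under your hypothesis). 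The one substantive point is the hypothesis $\e R<\infty$, which you introduce and the lemma does not state. You are right to flag it, and in fact it is not merely a convenience of your method: pointwise existence of each $\e\,s_{X^*}(\alpha,u)$ --- which is all that Definition~\ref{dbochner} and the surrounding text require for the Bochner expectation to be ``well defined'' --- does not prevent $\e\sup_{a\in C_\alpha(X^*)}\|a\|_{\ell^2}$ from diverging as $\alpha\to0^{+}$, in which case $f$ need not be in $L^1$ and the candidate $0$-cut of $\e^B X^*$ need not be bounded, so the conclusion of the lemma can actually fail. Your proof therefore establishes the lemma under the standard integrability condition $\e\,d_H\bigl(C_0(X^*),\{0\}\bigr)<\infty$ (the condition implicit in the cited references), and in doing so it exposes a hypothesis the paper leaves unstated; that is an improvement on the text rather than a defect of your argument.
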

\begin{proof}
See \cite{Vit,Korner-b}.
\end{proof}
\par\noindent
By Lemma~\ref{addsup} and linearity of the classical expectation $\e$ we get
\begin{lemma}
For all $d$-dimensional fuzzy random variables ${X^*}$ and ${Y^*}$
\begin{multline}\label{additive1}
s_{\e^B({X^*}\oplus {Y^*})}(\alpha,u)=\e s_{{X^*}\oplus {Y^*}}(\alpha,u)\\=\e \left(s_{X^*}(\alpha,u)+s_{Y^*}(\alpha,u)\right)=\e s_{{X^*}}(\alpha,u)+\e s_{{Y^*}}(\alpha,u)
\end{multline}
and, for every $\lambda\in\R,$
\begin{equation}\label{pseudolin1}
s_{\e^B(\lambda\odot {X^*})}(\alpha,u)=\e s_{\lambda\odot {X^*}}(\alpha,u)=|\lambda|\e s_{\sign{(\lambda)} \odot {X^*}}(\alpha,u).
\end{equation}
\end{lemma}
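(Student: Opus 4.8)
The plan is to reduce both identities to the defining equation \eqref{be} of the Bochner expectation together with the elementary behaviour of support functions already recorded in Lemma~\ref{addsup}, after which only the linearity and positive homogeneity of the classical (scalar) expectation remain to be invoked.

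First I would treat the additive identity. Fix $\alpha\in(0,1]$ and $u\in\s^{d-1}$. By the definition \eqref{be} of the Bochner expectation applied to the fuzzy vector $X^*\oplus Y^*$, the leftmost term equals $\e\,s_{X^*\oplus Y^*}(\alpha,u)$. The first assertion \eqref{additive} of Lemma~\ref{addsup} lets me rewrite the integrand pointwise in $\omega$ as $s_{X^*}(\alpha,u)+s_{Y^*}(\alpha,u)$. Since, for fixed $(\alpha,u)$, each of $s_{X^*}(\alpha,u)$ and $s_{Y^*}(\alpha,u)$ is a genuine real-valued random variable (as noted just before Lemma~\ref{econv0}), linearity of the scalar expectation splits this sum into $\e\,s_{X^*}(\alpha,u)+\e\,s_{Y^*}(\alpha,u)$, which is exactly the chain \eqref{additive1}.

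For the homogeneity identity I would proceed analogously: equation \eqref{be} gives $s_{\e^B(\lambda\odot X^*)}(\alpha,u)=\e\,s_{\lambda\odot X^*}(\alpha,u)$, and the second assertion \eqref{pseudolin} of Lemma~\ref{addsup} turns the integrand into $|\lambda|\,s_{\sign(\lambda)\odot X^*}(\alpha,u)$. As $|\lambda|$ is a deterministic constant, it factors out of the expectation, yielding \eqref{pseudolin1}.

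I do not anticipate a genuine obstacle here, since the statement is a direct transfer of the Minkowski-sum and scaling behaviour of support functions through a scalar expectation. The only point requiring a word of care is measurability and integrability: for each fixed $(\alpha,u)$ the map $\omega\mapsto s_{X^*}(\alpha,u)(\omega)$ must be a real random variable with finite mean, so that $\e$ is defined and behaves linearly. This is precisely the well-definedness already established for the Bochner expectation, so every link in the chain of equalities is justified.
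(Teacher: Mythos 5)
Your argument is exactly the one the paper intends: the lemma is introduced with the phrase ``By Lemma~\ref{addsup} and linearity of the classical expectation $\e$ we get,'' i.e.\ apply the defining relation \eqref{be} to pass from $\e^B$ to a scalar expectation, split the integrand via \eqref{additive} and \eqref{pseudolin}, and finish with linearity of $\e$. Your additional remark on measurability and integrability for fixed $(\alpha,u)$ matches the well-definedness discussion given after Definition~\ref{dbochner}, so the proposal is correct and essentially identical to the paper's reasoning.
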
$ $\par

As a corollary from \eqref{additive1} and \eqref{pseudolin} we get
\begin{lemma}
The Bochner expectation is positive linear, i.e., for all $d$-di\-men\-sio\-nal fuzzy random variables ${X^*}$ and ${Y^*}$ with values in $\mathcal F_{\text{cconv}}(\R^d)$ and for all non-negative real numbers $\lambda$ and $\mu,$
\begin{equation*}
\e^B\left(\lambda\odot {X^*}\oplus\mu\odot {Y^*}\right)=\lambda\odot\e^B{X^*}\oplus\mu\odot\e^B{Y^*},
\text{ for }\lambda,\mu\geq 0.
\end{equation*}
\end{lemma}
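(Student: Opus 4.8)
The plan is to reduce the asserted equality of two fuzzy vectors to an equality of their real-valued support functions. By Theorem~\ref{inverse} (equivalently, by the injectivity of the embedding $j$ in Theorem~\ref{embthm}) an element of $\mathcal F_{\text{cconv}}(\R^d)$ is uniquely determined by its support function. Moreover, Lemma~\ref{econv0} guarantees that each Bochner expectation appearing below is again a bona fide element of $\mathcal F_{\text{cconv}}(\R^d)$, so that both sides of the claimed identity are genuine fuzzy vectors and it is legitimate to compare their support functions. Hence it will suffice to check that the left- and right-hand sides carry the same support function at every point $(\alpha,u)\in(0,1]\times\s^{d-1}$.

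First I would evaluate the support function of the left-hand side. Applying the defining relation \eqref{be} of the Bochner expectation, then the additivity \eqref{additive} of Lemma~\ref{addsup}, then the homogeneity relation \eqref{pseudolin} (observing that for $\lambda\geq 0$ one has $\sign(\lambda)\odot X^*=X^*$, so that $s_{\lambda\odot X^*}(\alpha,u)=\lambda\, s_{X^*}(\alpha,u)$), and finally the linearity of the ordinary expectation $\e$, I obtain
\[
s_{\,\e^B(\lambda\odot X^*\oplus\mu\odot Y^*)}(\alpha,u)
=\lambda\,\e\, s_{X^*}(\alpha,u)+\mu\,\e\, s_{Y^*}(\alpha,u).
\]
This is exactly the chain of identities already recorded in \eqref{additive1} and \eqref{pseudolin1}, specialized to non-negative scalars.

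Next I would compute the support function of the right-hand side using the same tools in the opposite direction: the additivity \eqref{additive} and the positive homogeneity \eqref{pseudolin} of the support function give $s_{\lambda\odot\e^B X^*\oplus\mu\odot\e^B Y^*}(\alpha,u)=\lambda\, s_{\e^B X^*}(\alpha,u)+\mu\, s_{\e^B Y^*}(\alpha,u)$, and a further appeal to \eqref{be} rewrites each $s_{\e^B X^*}(\alpha,u)$ as $\e\, s_{X^*}(\alpha,u)$ and $s_{\e^B Y^*}(\alpha,u)$ as $\e\, s_{Y^*}(\alpha,u)$. The two expressions therefore coincide for every $(\alpha,u)$, and the uniqueness statement of Theorem~\ref{inverse} yields the desired equality of fuzzy vectors.

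I do not expect a substantive obstacle here: the argument is essentially bookkeeping on support functions, combining Lemma~\ref{addsup} with the linearity of the scalar expectation. The only two points that genuinely require care are, first, the finiteness of the scalar expectations $\e\, s_{X^*}(\alpha,u)$ and $\e\, s_{Y^*}(\alpha,u)$ together with the fact (Lemma~\ref{econv0}) that the resulting object lies back in the cone $\mathcal F_{\text{cconv}}(\R^d)$, so that comparing support functions is meaningful; and second, the restriction $\lambda,\mu\geq 0$, which is precisely what is needed so that $\sign(\lambda)\odot X^*=X^*$ and the absolute values in \eqref{pseudolin} and \eqref{pseudolin1} drop out, giving honest positive homogeneity rather than the signed version.
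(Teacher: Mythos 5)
Your proof is correct and follows essentially the same route as the paper, which derives the lemma directly as a corollary of \eqref{additive1} and \eqref{pseudolin1} by comparing support functions of both sides and invoking the uniqueness of the fuzzy vector with a given support function. Your version merely spells out the bookkeeping (and the role of Lemma~\ref{econv0} and of the restriction $\lambda,\mu\geq 0$) in more detail than the paper does.
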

\begin{remark}
Although the Bochner expectation is only positive linear, it has sufficiently good properties for our purposes.\end{remark}
\subsubsection{\textbf{Fr\'echet variance of fuzzy vectors}}
In 1948 M. Fr\'echet gave a definition of the expectation and variance of a fuzzy random variable \cite{Frechet}. Here we will be using a modified definition of Fr\'echet's variance, replacing in \eqref{variance} Fr\'echet's expectation $\e^F$ in the original definition with the Bochner expectation $\e^B$.
\begin{note}
This modification arises quite naturally with the observation that, in the $L^2$-metric space
\begin{equation*}
\F:=\left(\mathcal F_{\text{cconv}}(\R^d),d_2\right)\overset{iso}{=}\left(j\left(\mathcal F_{\text{cconv}}(\R^d)\right),\rho_2\right)
\end{equation*}
the Bochner and Fr\'echet (and for that matter also the Aumann) expectations are equal:
\begin{equation*}
\e^B=\e^F.
\end{equation*}
For the proof see \cite{Korner-b}.
\end{note}
\begin{definition}
The Frech\'et variance $\Var^F {X^*}$ of a $d$-dimensional fuzzy random variable ${X^*}$ is defined by integrating its support function:
\begin{equation}\label{variance}
\begin{split}
\Var^F {X^*}=&\e d_2^2({X^*},\e^B{X^*})=\e\rho_2^2(s_{X^*},s_{\e^B{X^*}})\\
=&\e\int_0^1\int_{\s^{d-1}}|s_{X^*}(\alpha,u)-s_{\e^B {X^*}}(\alpha,u)|^2d\lambda(u)d\alpha\\
=&\int_0^1\int_{\s^{d-1}}\e |s_{X^*}(\alpha,u)
-\e^Bs_{{X^*}}(\alpha,u)|^2d\lambda(u)d\alpha\\
=&\|\Var s_{X^*}\|_{L^1((0,1]\times\s^{d-1})}.
\end{split}
\end{equation}
\end{definition}
\vspace{1mm}

\begin{lemma}\label{expvar}
Let ${X^*}$ be a $d$-dimensional random variable with values in $\mathcal{F}_{\textrm{cconv}}(\R^d).$
\begin{itemize}
\item[i)] If $\e^B {X^*}$ exists then $\e s_{{X^*}}(\alpha,u)$ exists for every $\alpha\in(0,1]$ and $u\in \s^{d-1}.$
\item[ii)] If $\Var^B {X^*}$ (variance using the Bochner expectation $\e^B$) is finite then (the classical variance) $\Var s_{X^*}(\alpha,u)$ is finite for every $\alpha\in(0,1]$ and $u\in \s^{d-1}.$
\end{itemize}
\end{lemma}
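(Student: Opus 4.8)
The plan is to reduce both statements to a single pointwise estimate on the cone of support functions, valid uniformly in $\omega$: there is a constant $C=C(\alpha_0,p,d)$, \emph{independent of} $\omega$ and of $u_0$, such that for every $x^*\in\mathcal F_{\text{cconv}}(\R^d)$ and every fixed $(\alpha_0,u_0)\in(0,1]\times\s^{d-1}$,
\[
|s_{x^*}(\alpha_0,u_0)|\le C\,\|s_{x^*}\|_p.
\]
Granting this, part i) follows at once: existence of $\e^B {X^*}$ is the statement that $j({X^*})=s_{{X^*}}$ is Bochner integrable in $L^p\bigl((0,1]\times\s^{d-1}\bigr)$, i.e.\ $\e\|s_{{X^*}}\|_p<\infty$; applying the estimate pointwise in $\omega$ and taking expectations gives $\e|s_{{X^*}}(\alpha_0,u_0)|\le C\,\e\|s_{{X^*}}\|_p<\infty$, so the real expectation $\e\, s_{{X^*}}(\alpha_0,u_0)$ exists. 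For part ii) I would run the same estimate with $p=2$, square it, take expectations, and use Tonelli to obtain
\[
\e|s_{{X^*}}(\alpha_0,u_0)|^2\le C^2\int_0^1\int_{\s^{d-1}}\e\,|s_{{X^*}}(\alpha,u)|^2\,d\lambda(u)\,d\alpha .
\]
By the variance formula \eqref{variance} and \eqref{be} the right-hand integral equals $\Var^B {X^*}+\|s_{\e^B {X^*}}\|_2^2$, and both summands are finite: the first by hypothesis, the second because $\e^B {X^*}\in\mathcal F_{\text{cconv}}(\R^d)$ (Lemma~\ref{econv0}) has compact support, hence a bounded support function on a finite-measure space. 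Thus $\Var s_{{X^*}}(\alpha_0,u_0)\le\e|s_{{X^*}}(\alpha_0,u_0)|^2<\infty$.

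Everything therefore rests on the pointwise estimate, which I would establish in two independent reductions. First, to pass from the single level $\alpha_0$ to an average over $\alpha$, I use monotonicity in $\alpha$ (property (iv)): for any fixed direction $w$ the function $s_{x^*}(\cdot,w)$ is non-increasing, so
\[
s_{x^*}(\alpha_0,w)\le\frac1{\alpha_0}\int_0^{\alpha_0}s_{x^*}(\alpha,w)\,d\alpha\le\frac1{\alpha_0}\int_0^1|s_{x^*}(\alpha,w)|\,d\alpha ,
\]
while sub-additivity (property (iii)) gives $s_{x^*}(u_0)+s_{x^*}(-u_0)\ge s_{x^*}(0)=0$ and hence $|s_{x^*}(\alpha_0,u_0)|\le\max\bigl(s_{x^*}(\alpha_0,u_0)^+,s_{x^*}(\alpha_0,-u_0)^+\bigr)$. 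This reduces matters to controlling $\int_0^1|s_{x^*}(\alpha,w)|\,d\alpha$ for $w\in\{u_0,-u_0\}$.

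The hard part is the second reduction, from a single direction $w$ to the $L^p$-norm over the whole sphere: a priori point evaluation is not even a bounded functional on $L^p\bigl((0,1]\times\s^{d-1}\bigr)$, so this step cannot be soft and must exploit the convex-geometric structure. The key is a spherical-cap estimate. If $\beta:=s_{x^*}(\alpha,w)>0$, pick $a^*\in C_\alpha(x^*)$ attaining the supremum in \eqref{sf}, so $\langle w,a^*\rangle=\beta$ and $\|a^*\|\ge\beta$; then on the \emph{fixed} cap $\Sigma=\{u\in\s^{d-1}:\langle u,a^*/\|a^*\|\rangle\ge\tfrac12\}$ one has $s_{x^*}(\alpha,u)\ge\langle u,a^*\rangle\ge\tfrac12\|a^*\|\ge\tfrac{\beta}2$. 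Since $\lambda(\Sigma)=c_d>0$ depends only on $d$, integrating $|s_{x^*}(\alpha,\cdot)|^p$ over $\Sigma$ yields $|s_{x^*}(\alpha,w)|\le 2c_d^{-1/p}\|s_{x^*}(\alpha,\cdot)\|_{L^p(\s^{d-1})}$, the case $\beta\le0$ being handled via the antipode $-w$ and the subadditivity inequality above.

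Combining the two reductions with a Hölder/Jensen step in $\alpha$ (legitimate because $(0,1]$ carries unit mass and $p\ge1$),
\[
\int_0^1\|s_{x^*}(\alpha,\cdot)\|_{L^p(\s^{d-1})}\,d\alpha\le\Bigl(\int_0^1\|s_{x^*}(\alpha,\cdot)\|_{L^p(\s^{d-1})}^p\,d\alpha\Bigr)^{1/p}=\|s_{x^*}\|_p ,
\]
gives the claimed estimate with $C=\tfrac{2}{\alpha_0}c_d^{-1/p}$; the case $p=\infty$ is trivial since there $|s_{x^*}(\alpha_0,u_0)|\le\|s_{x^*}\|_\infty$. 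I expect the cap estimate to be the main obstacle: the content is precisely that the set where the support function is comparable to its value at $w$ has measure bounded below by a constant depending only on $d$, \emph{independently} of the (possibly enormous and nearly off-axis) diameter of $C_\alpha(x^*)$, which is what makes the otherwise-unbounded point evaluation tame on the cone of support functions.
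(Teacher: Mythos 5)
Your proof is correct, but it takes a genuinely different and substantially more elaborate route than the paper. The paper's argument for i) is a one\--line tautology: by Definition~\ref{dbochner} the Bochner expectation \emph{is defined} through the pointwise identity $s_{\e^B X^*}(\alpha,u)=\e\,s_{X^*}(\alpha,u)$, so its existence already presupposes the existence of each classical expectation $\e\,s_{X^*}(\alpha,u)$; for ii) the paper merely gestures at \eqref{1p} and the regularity properties (i)--(iv) of support functions without giving details. You instead read ``$\e^B X^*$ exists'' in the standard functional\--analytic sense ($\e\|s_{X^*}\|_p<\infty$, Bochner integrability in $L^p$), under which the lemma becomes a genuine statement, and you supply the missing quantitative content: the pointwise bound $|s_{x^*}(\alpha_0,u_0)|\le \tfrac{2}{\alpha_0}c_d^{-1/p}\|s_{x^*}\|_p$ on the cone of support functions, obtained from monotonicity in $\alpha$, the antipodal inequality $s(u)+s(-u)\ge 0$, and the spherical\--cap argument (which is correct: the maximizer $a^*$ of $\langle w,\cdot\rangle$ over the compact set $C_\alpha(x^*)$ forces $s_{x^*}(\alpha,\cdot)\ge\beta/2$ on a cap of fixed normalized measure $c_d$, and the Chebyshev step follows). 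This buys a uniform statement that point evaluation, though unbounded on all of $L^p$, is bounded on the image cone $j(\mathcal F_{\text{cconv}}(\R^d))$ --- a fact the paper never records but which is arguably what its terse proof of ii) implicitly relies on. The only cosmetic caveat is that property (iii) is stated in the paper for $u,v\in\s^{d-1}$ whereas you apply sub\--additivity at $u_0+(-u_0)=0$; this is harmless since the support function extends positively homogeneously to $\R^d$ with $s(0)=0$, but it deserves a sentence.
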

\begin{proof}$ $\par
\begin{itemize}
\item[i)] By definition \eqref{dbochner} of the Bochner expectation
\begin{equation}\label{1p}
s_{\e^B{X^*}}(\alpha,u)=\e s_{X^*}(\alpha,u).
\end{equation}
So, if $\e^B{X^*}$ exists then, for every $(\alpha,u)\in(0,1]\times\s^{d-1},$ the classical expectation $\e s_{X^*}(\alpha,u)$ exists.
\item[ii)] We apply \eqref{1p} and use the regularity properties of support functions given just after Definition \eqref{sfff}.
\end{itemize}
\end{proof}

\section{Donsker's type theorem for fuzzy random variables}\label{approx}
Let $X_{n}^*$, $n=1,2,\ldots$ be a sequence of mutually independent identically distributed fuzzy random variables defined on some probability space $(\Omega,\mathcal B,\p)$ and taking values in the space $\mathcal F_{\text{cconv}}(\R^d)$ of $d$-dimensional fuzzy vectors equipped with a metric $d_p$ for some $p\in[1,\infty)$ as defined in \eqref{dp}. (Recall that $(\mathcal F_{\text{cconv}}(\R^d), d_p),$ for $1\leq p<\infty$ is a complete and separable metric space whereas if $p=\infty$ the metric space $(\mathcal  F_{\text{cconv}}(\R^d), d_\infty)$ is only complete.)
\vspace{1mm}\par\noindent
We also assume that
\begin{equation}\label{zal}
\e^BX_j^*=m^*\text{ exists and }\Var^F X_j^*=\sigma^2<+\infty,
\end{equation}
where the expectation operator $\e^B$ (the variance $\Var^F,$ resp.) is defined in \eqref{be} (in \eqref{variance}, resp.).
\vspace{1mm}\par\noindent
Let $(\alpha,u)$ be fixed but otherwise arbitrary. We denote
\begin{equation*}
\tilde\sigma^2=\Var s_{X_j^*}(\alpha,u),
\end{equation*}
for all $j\geq1,$ where $\Var$ is the classical variance of real random variable.

\smallskip
In the following we generalize the construction of a standard (i.e., taking values in $\R$) Brownian motion, (see e.g. \cite[p. 220]{P}, \cite[p. 66]{KS}, \cite[p. 518]{RY}).\vspace{1mm}\par\noindent
Let
\begin{equation*}
X_0^*=0,\, S_{0}^*=0,\, S_{k}^*=\bigoplus_{i=1}^kX_{i}^*.
\end{equation*}
The partial sums $S_k^*$ can be thought of as a natural generalization of the standard random walk in $\R.$
\vspace{1mm}\par\noindent
With the sequence $\{S_k^*\}_{k=0}^\infty$ we now associate, by means of linear interpolation, a continuous-time process
\begin{equation}\label{lnt}
L_t^*=S_{\lfloor t\rfloor}
\oplus(t-\lfloor t\rfloor)\odot X_{\lfloor t\rfloor+1}^*\text{ for }t\geq 0,
\end{equation}
where $\lfloor t\rfloor$ is the {\em floor function}, i.e., the greatest integer less than or equal to $t.$

\noindent We now normalize $L_t^*$ appropriately to obtain another time-continuous process:
\begin{equation}\label{mnt}
M_{t,n}^*=L_{nt}^*\slash(\tilde\sigma\sqrt n).
\end{equation}
The processes  $L_t^*$ and $M_t^*$ are elements of the function space $$C\left([0,\infty),(\mathcal  F_{\text{cconv}}(\R^d),d_p)\right),$$ i.e., the space of continuous functions $f:[0,\infty)\to\mathcal F_{\text{cconv}}(\R^d).$ However, we do not want to work with this function space.
Instead, using the $j$-map defined in \eqref{j}, we embed $M_t^*$ into the space $$L^p\left((0,1]\times \s^{d-1}\right)$$ which is more suitable for our considerations.
\par
At first we compute the support function $s_{M_{t,n}^*}$ of the process $M_{t,n}^*$ defined by \eqref{lnt} and \eqref{mnt}. By properties (i)-(iv) of Definition \ref{defxi} of the support function and Lemma~\ref{addsup} we have
\begin{equation}\label{smtn}
\begin{split}
s_{M_{t,n}^*}\,(\alpha,u)=&
\frac{1}{\tilde\sigma\sqrt n}s_{L_{nt}^*}(\alpha,u)\\
=&\frac{1}{\tilde\sigma\sqrt n}\left(s_{S_{\lfloor nt\rfloor}}(\alpha,u)
+(nt-\lfloor nt\rfloor)s_{X_{\lfloor nt\rfloor+1}^*}(\alpha,u)\right).
\end{split}
\end{equation}
\begin{remark}\label{rrv}
The mapping $t\mapsto s_{M_{t,n}^*(\omega)}(\cdot,\cdot)$ (for $\omega\in\Omega$ fixed) is an element of the space
$\mathcal C=C\left([0,\infty),L^p((0,1]\times\s^{d-1})\right).$
Recall that on the other hand, for every random variable $X^*$ with values in $\mathcal F_{\text{cconv}}(\R^d)$ and for $\alpha$ and $u$ fixed, the support function $s_{X^*(\cdot)}(\alpha,u)$ is a real random variable.
\end{remark}
\par
Now we are ready to prove the fuzzy analogue of the classical Donsker invariance theorem.
In the next section we prove that, when appropriately normalized (similarly to the classical case),
the random walk $\{S_k^*\}$ approximates Brownian motion. Since Brownian motion is time-continuous process we also had to interpolate $\{S_k^*\}.$

To ensure convergence of $s_{M_{t,n}^*}$ we must appropriately subtract expectations. Otherwise the process would have a drift going to infinity.
Therefore we consider a modified process. For better readability we drop in our writings below the argument $(\alpha,u)$ of the support functions, and the $B$ superscript of the Bochner expectation. (Recall that for all $j,$ we write $\e^BX_j^*=m^*.$)
\begin{multline}\label{mtn1}
\tilde s_{M_{t,n}^*}=s_{M_{t,n}^*}-\frac{1}{\tilde\sigma\sqrt{n}}
(\lfloor nt\rfloor+\left(nt-\lfloor nt\rfloor)\right)j(m^*)\\
=\frac{1}{\tilde\sigma\sqrt n}\left(s_{S_{\lfloor nt\rfloor}^*}-\lfloor nt\rfloor
s_{m^*}+(nt-\lfloor nt\rfloor)s_{X_{\lfloor nt\rfloor}^*+1}-(nt-\lfloor nt\rfloor)s_{m^*}\right)
\end{multline}
\begin{theorem}\label{main1}
Let the setting be as in section~1. Let $\tilde s_{M_{t,n}^*}$ be defined as in \eqref{mtn1} and let $b_t$ denote the classical Brownian motion in $\R.$ We assume that \eqref{zal} is satisfied. Then for every $k\in\N$ and for every finite sequence of times $0\leq t_1<t_2<\cdots<t_k,$ and for every $\alpha\in(0,1],$ and $u\in\s^{d-1},$ we have that
\begin{equation*}
\left(\tilde s_{M_{t_1,n}^*}(\alpha,u),
\ldots,\tilde s_{M_{t_k,n}^*}(\alpha,u)\right)
\to\left(b_{t_1},\ldots,b_{t_k}\right)
\end{equation*}
in distribution as $n\to\infty.$
\end{theorem}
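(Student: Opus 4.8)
The plan is to reduce the statement, for the fixed pair $(\alpha,u)$, to the classical Donsker invariance principle for a real-valued i.i.d.\ random walk; the point is that evaluating the support function at a fixed $(\alpha,u)$ collapses the entire fuzzy construction to an ordinary sum of real random variables. First I would set $Y_i:=s_{X_i^*}(\alpha,u)$ for $i\geq 1$. By Lemma~\ref{i} (together with the hypothesis that the $X_i^*$ are identically distributed) the $Y_i$ form a sequence of i.i.d.\ real random variables; by the definition \eqref{be} of the Bochner expectation and the assumption $\e^BX_i^*=m^*$, their common mean is $\mu:=s_{m^*}(\alpha,u)=\e Y_i$; and by Lemma~\ref{expvar}(ii) applied to the finite Fr\'echet variance in \eqref{zal}, their common variance is $\tilde\sigma^2=\Var Y_i<\infty$. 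Putting $Z_i:=(Y_i-\mu)/\tilde\sigma$, these are i.i.d.\ with mean $0$ and variance $1$. Note that, because the conclusion concerns only finite-dimensional distributions at a fixed $(\alpha,u)$, no tightness or process-level argument is needed.

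Next, using additivity of the support function \eqref{additive} to expand $s_{S_{\lfloor nt\rfloor}^*}=\sum_{i=1}^{\lfloor nt\rfloor}Y_i$ and substituting into \eqref{mtn1}, I would rewrite the centered, normalized, interpolated process as
\begin{equation*}
\tilde s_{M_{t,n}^*}(\alpha,u)=\frac{1}{\sqrt n}\sum_{i=1}^{\lfloor nt\rfloor}Z_i+\frac{nt-\lfloor nt\rfloor}{\sqrt n}\,Z_{\lfloor nt\rfloor+1}=:W_n(t)+R_n(t).
\end{equation*}
The remainder $R_n(t)$ is a harmless interpolation term: since $0\leq nt-\lfloor nt\rfloor<1$ and $\Var Z_{\lfloor nt\rfloor+1}=1$, one has $\e R_n(t)^2=\lp\tfrac{nt-\lfloor nt\rfloor}{\sqrt n}\rp^2\leq \tfrac1n\to 0$, so $R_n(t)\to 0$ in $L^2$, hence in probability, for each fixed $t$. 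By Slutsky's theorem it therefore suffices to establish the finite-dimensional convergence of the step-interpolant $W_n$.

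For the finite-dimensional distributions I would pass to increments. Fixing $0\leq t_1<\cdots<t_k$ and setting $t_0:=0$, the increments
\begin{equation*}
\Delta_{n,j}:=W_n(t_j)-W_n(t_{j-1})=\frac{1}{\sqrt n}\sum_{i=\lfloor nt_{j-1}\rfloor+1}^{\lfloor nt_j\rfloor}Z_i,\qquad j=1,\dots,k,
\end{equation*}
are built from disjoint blocks of the i.i.d.\ sequence $\{Z_i\}$ and hence are mutually independent. Each $\Delta_{n,j}$ is a normalized sum of $\lfloor nt_j\rfloor-\lfloor nt_{j-1}\rfloor$ centered unit-variance summands with $\Var\Delta_{n,j}=(\lfloor nt_j\rfloor-\lfloor nt_{j-1}\rfloor)/n\to t_j-t_{j-1}$, so by the Lindeberg--L\'evy central limit theorem $\Delta_{n,j}\to N(0,t_j-t_{j-1})$ in distribution. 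By independence of the blocks, the joint law of $(\Delta_{n,1},\dots,\Delta_{n,k})$ converges to the product of these Gaussians, which is precisely the law of the independent Brownian increments $(b_{t_1}-b_{t_0},\dots,b_{t_k}-b_{t_{k-1}})$. Applying the continuous linear partial-sum map that recovers $(W_n(t_1),\dots,W_n(t_k))$ from its increments, together with the continuous mapping theorem, gives $(W_n(t_1),\dots,W_n(t_k))\to(b_{t_1},\dots,b_{t_k})$ in distribution; combining this with the negligibility of $R_n$ via Slutsky completes the proof. The only genuinely delicate point is the \emph{joint} (rather than marginal) convergence, which the independence of the disjoint increment blocks resolves cleanly; alternatively it may be verified by the Cram\'er--Wold device, reducing each linear combination of the $\Delta_{n,j}$ to a single scalar triangular-array CLT.
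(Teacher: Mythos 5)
Your proposal is correct and follows essentially the same route as the paper's own proof: discard the interpolation term by a second-moment/Chebyshev bound and Lemma~\ref{pl}, pass to increments over disjoint blocks via the continuous mapping Lemma~\ref{pl2}, and apply the classical CLT to each independent block before reassembling the joint limit. The only cosmetic difference is that the paper establishes the joint convergence of the independent increments by an explicit characteristic-function factorization, whereas you invoke the general product-of-marginals fact (or Cram\'er--Wold), and the paper carries out the renormalization from $1/\sqrt{n}$ to $\sqrt{t_j-t_{j-1}}/\sqrt{\lfloor nt_j\rfloor-\lfloor nt_{j-1}\rfloor}$ explicitly where you absorb it into the variance computation.
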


\begin{note}
For the classical Donsker invariance principle there are a number of different proofs available in the literature (see e.g. \cite{P,KS,RY}. All of them are essentially the same, the differences are in techniques preferred by the authors.
We found the approach given in \cite{KS} to be the most suitable for our purpose of generalizing the Donsker theorem to a fuzzy context. Hence we follow the main steps of the proof given in \cite[p. 67]{KS} adjusting them appropriately to the fuzzy setting.
\end{note}
Before proceeding to the proof of Theorem~\ref{main1} we need two simple lemmas about convergence of classical random variables in metric spaces (see e. g. \cite{P}).
\subsubsection{Convergence of probability measures}
\begin{lemma}\label{pl}
Let $X_n,$ $Y_n,$ $n=1,2,\ldots,$ and $X$ be random variables taking values in a separable metric space $(S,d).$ Suppose that for every fixed $n\geq1$ the random variables $X_n$ and $Y_n$ are defined on the same probability space.\vspace{0.5mm}

\noindent If, as $n$ tends to infinity,
\begin{equation*}
X_n\longrightarrow X\text{ in distribution and }d(X_n,Y_n)\longrightarrow 0\text{ in probability,}
\end{equation*}
then
\begin{equation*}
Y_n\longrightarrow X\text{ in distribution.}
\end{equation*}
\end{lemma}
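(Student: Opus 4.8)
The statement is the classical ``converging together'' lemma, so the plan is to verify one of the standard equivalent formulations of weak convergence directly. I would use the characterisation by test functions: $Y_n\to X$ in distribution if and only if $\e f(Y_n)\to\e f(X)$ for every bounded uniformly continuous $f\colon S\to\R$. Thus it suffices to fix such an $f$ and prove this convergence. Before starting I would record that separability of $(S,d)$ is exactly what guarantees that the metric $d\colon S\times S\to[0,\infty)$ is jointly Borel measurable and that the Borel $\sigma$-field of $S$ is generated by the open balls; consequently $d(X_n,Y_n)$ is a genuine real random variable and the hypothesis ``$d(X_n,Y_n)\to 0$ in probability'' is meaningful.

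First I would split by the triangle inequality,
\begin{equation*}
\left|\e f(Y_n)-\e f(X)\right|\le\e\left|f(Y_n)-f(X_n)\right|+\left|\e f(X_n)-\e f(X)\right|.
\end{equation*}
The second summand tends to $0$ because $X_n\to X$ in distribution and $f$ is bounded and continuous. For the first summand fix $\varepsilon>0$; by uniform continuity there is $\delta>0$ with $|f(x)-f(y)|\le\varepsilon$ whenever $d(x,y)\le\delta$. Splitting the expectation over the event $\{d(X_n,Y_n)\le\delta\}$ and its complement gives
\begin{equation*}
\e\left|f(Y_n)-f(X_n)\right|\le\varepsilon+2\|f\|_\infty\,\p\left(d(X_n,Y_n)>\delta\right).
\end{equation*}
Since $d(X_n,Y_n)\to 0$ in probability, $\p(d(X_n,Y_n)>\delta)\to 0$, so $\limsup_{n}\e|f(Y_n)-f(X_n)|\le\varepsilon$; letting $\varepsilon\downarrow 0$ shows the first summand also vanishes in the limit. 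Hence $\e f(Y_n)\to\e f(X)$, which is what was required.

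There is no serious obstacle here: the lemma is elementary and the only points demanding attention are bookkeeping ones. The first is the measurability remark above, where separability is genuinely used. The second is the choice to test against bounded \emph{uniformly} continuous functions rather than merely bounded continuous ones, since uniform continuity is what lets a single $\delta$ control $|f(X_n)-f(Y_n)|$ simultaneously on the whole high-probability event. As an alternative one could bypass test functions entirely and argue through the portmanteau theorem in its closed-set form: for closed $F$ and its closed $\delta$-neighbourhood $F^\delta=\{x:d(x,F)\le\delta\}$ one has $\{Y_n\in F\}\subseteq\{X_n\in F^\delta\}\cup\{d(X_n,Y_n)>\delta\}$, whence $\limsup_n\p(Y_n\in F)\le\p(X\in F^\delta)$, and letting $\delta\downarrow 0$ with $F^\delta\downarrow F$ yields $\limsup_n\p(Y_n\in F)\le\p(X\in F)$, again giving convergence in distribution.
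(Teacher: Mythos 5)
Your proof is correct. Note, however, that the paper does not actually prove Lemma~\ref{pl}: it states it as one of "two simple lemmas about convergence of classical random variables in metric spaces" and refers the reader to Parthasarathy's book, so there is no in-paper argument to compare against. Your write-up supplies a complete, self-contained proof of this "converging together" lemma via the standard test-function route, and the estimate
\begin{equation*}
\e\left|f(Y_n)-f(X_n)\right|\le\varepsilon+2\|f\|_\infty\,\p\bigl(d(X_n,Y_n)>\delta\bigr)
\end{equation*}
together with the boundedness-plus-uniform-continuity choice of $f$ is exactly the classical argument. Your two side remarks are also well placed: separability is indeed what makes $d(X_n,Y_n)$ a genuine random variable (since then the Borel $\sigma$-field of $S\times S$ is the product of the Borel $\sigma$-fields and $d$ is continuous), and testing against bounded \emph{uniformly} continuous functions rather than merely continuous ones is legitimate by the portmanteau theorem and is what lets a single $\delta$ work uniformly. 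The alternative closed-set argument you sketch, using $\{Y_n\in F\}\subseteq\{X_n\in F^\delta\}\cup\{d(X_n,Y_n)>\delta\}$ and $F^\delta\downarrow F$, is likewise valid and closer in spirit to the portmanteau formulation one finds in the cited reference. Either version would serve as a proof the paper omits.
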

\begin{lemma}\label{pl2}
Let $X_1,X_2\ldots,$ be a sequence of random variables with values in a metric space $(S_1,d_1).$ Assume that as $n$ tends to infinity then
\begin{equation*}
X_n\longrightarrow X\text{ in distribution.}
\end{equation*}
Now consider a second metric space $(S_2,d_2).$ Let $f:S_1\to S_2$ be a continuous function. Then, as $n\to\infty,$ the sequence
\begin{equation*}
f(X_n)\longrightarrow f(X)\text{ in distribution.}
\end{equation*}
\end{lemma}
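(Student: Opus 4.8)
The plan is to prove Lemma~\ref{pl2} using the test-function (Portmanteau) characterization of convergence in distribution, which is the cleanest route for random variables valued in an abstract metric space. Recall that $X_n\to X$ in distribution means precisely that $\e\,g(X_n)\to\e\,g(X)$ for every bounded continuous function $g:S_1\to\R$. The goal is to establish the corresponding statement one level up: $\e\,h(f(X_n))\to\e\,h(f(X))$ for every bounded continuous $h:S_2\to\R$. The entire argument rests on the elementary observation that precomposing such an $h$ with the continuous map $f$ produces an admissible test function on $S_1$.

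First I would fix an arbitrary bounded continuous function $h:S_2\to\R$ and set $g:=h\circ f$. Since $f:S_1\to S_2$ and $h:S_2\to\R$ are both continuous, the composition $g$ is continuous as a map $S_1\to\R$; moreover $g$ is bounded because $h$ is bounded (its range is contained in that of $h$). Hence $g$ is a legitimate bounded continuous test function on $S_1$, and the hypothesis $X_n\to X$ in distribution applies to it directly. This yields
\begin{equation*}
\e\,(h\circ f)(X_n)\longrightarrow\e\,(h\circ f)(X)\qquad\text{as }n\to\infty,
\end{equation*}
which is nothing but $\e\,h(f(X_n))\to\e\,h(f(X))$. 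Because $h$ was an arbitrary bounded continuous function on $S_2$, the test-function criterion gives $f(X_n)\to f(X)$ in distribution, as claimed.

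There is no genuine obstacle here; the step that most warrants a word of care is the measure-theoretic bookkeeping. One must note that a continuous map between metric spaces is automatically Borel measurable, so that $f(X_n)$ and $f(X)$ are bona fide $S_2$-valued random variables and the laws being compared are well defined. If one instead starts from a definition of convergence in distribution phrased via distribution functions or via the behavior on open and closed sets, I would first invoke the equivalence of these formulations (the Portmanteau theorem) to reduce to the bounded-continuous-test-function version, after which the composition argument above applies verbatim. This is exactly the form in which the lemma will be used in the proof of Theorem~\ref{main1}, where $f$ will be the evaluation-type map sending a path-valued or support-function-valued limit to its finite-dimensional coordinates.
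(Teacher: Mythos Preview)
Your argument is correct: the bounded-continuous-test-function characterization together with the observation that $h\circ f$ is again bounded and continuous is exactly the standard proof of the continuous mapping theorem, and your remarks on Borel measurability of $f$ cover the only point that needs checking.

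As for comparison with the paper: the paper does not supply its own proof of Lemma~\ref{pl2}. It merely records the statement as one of ``two simple lemmas about convergence of classical random variables in metric spaces'' and refers the reader to Parthasarathy~\cite{P}. So there is nothing to compare against; you have filled in a proof the authors chose to omit, and the one you give is precisely the argument one finds in standard references. (One minor aside: your closing sentence about how the lemma is used in Theorem~\ref{main1} slightly mischaracterizes the application --- in the paper $f$ is the linear map $(x_1,x_2)\mapsto(x_1,x_2-x_1)$ on $\R^2$, not an evaluation map on a path space --- but this does not affect the correctness of your proof of the lemma itself.)
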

$ $\par

\smallskip
Now we have all that is needed for the proof of Theorem~\ref{main1}.
Again for better readability we write out the proof for $k=2$.
\begin{proof}[\textbf{Proof of Theorem~\ref{main1}}]$ $\vspace{1mm}\par\noindent

By \eqref{al} - the positive linearity of $j$ - equation~\eqref{smtn}, and property \eqref{1p} we have
\begin{equation*}
\left|\tilde s_{M_{t,n}^*}-\frac{1}{\tilde\sigma\sqrt{n}}
\left(s_{S_{\lfloor  nt\rfloor}^*}-\lfloor nt\rfloor s_{m^*}\right)\right|
\leq\frac{1}{\tilde\sigma\sqrt n}\left|s_{X_{\lfloor nt\rfloor+1}^*}-s_{m^*}\right|.
\end{equation*}
Hence, for every $\varepsilon>0,$ we get as a consequence of the previous in\-e\-qua\-li\-ty and Lemma~\ref{expvar} i) that
\begin{multline}\label{1}
\p\left(\left|\tilde s_{M_{t,n}^*}-\frac{1}{\tilde\sigma\sqrt{n}}\left(
s_{S_{\lfloor nt\rfloor}^*}-\lfloor nt\rfloor s_{m^*}\right)\right|>\varepsilon\right)\\
\leq\p\left(\frac{1}{\tilde\sigma\sqrt n}\left|s_{X_{\lfloor nt\rfloor+1}^*}-\e s_{X_{\lfloor nt\rfloor+1}^*}\right|>\varepsilon\right)
\end{multline}
and by Chebyshev's inequality applied to the right hand side of \eqref{1} we get
\begin{equation*}
\p\left(\left|\tilde s_{M_{t,n}^*}-\frac{1}{\tilde\sigma\sqrt{n}}
\left(
s_{S_{\lfloor nt\rfloor}^*}-\lfloor nt\rfloor s_{m^*}\right)\right|>\varepsilon\right)
\leq\frac{1}{\varepsilon^2n}
\longrightarrow 0\text{ as }n\longrightarrow\infty\,.
\end{equation*}\vspace{1mm}

\noindent Hence, for $t_1<t_2,$
\begin{multline*}
\left\|(\tilde s_{M_{t_1,n}^*},\tilde s_{M_{t_2,n}^*})-\frac{1}{\tilde\sigma\sqrt n}\left(s_{S_{\lfloor nt_1\rfloor}^*}-\lfloor nt_1\rfloor s_{m^*},s_{S_{\lfloor nt_2\rfloor}^*}-\lfloor nt_2\rfloor s_{m^*}\right)\right\|_{\ell^2(\R^2)}\\\longrightarrow 0\text{ in probability.}
\end{multline*}
By Lemma~\ref{pl} it is enough to show that
\begin{multline*}
\frac{1}{\tilde\sigma\sqrt n}\left(s_{S_{\lfloor nt_1\rfloor}^*}-\lfloor nt_1\rfloor s_{m^*},s_{S_{\lfloor nt_2\rfloor}^*}-\lfloor nt_2\rfloor s_{m^*}\right)\\
\longrightarrow\left(b_{t_1},b_{t_2}\right)\text{ in distribution.}
\end{multline*}
Equivalently, by Lemma~\ref{pl2}, taking $f:\R^2\to\R^2$ equal to $f(x_1,x_2)=(x_1,x_2-x_1)$ it is enough to show that
\begin{multline}\label{ft}
\frac{1}{\tilde\sigma\sqrt n}\left(\sum_{i=0}^{\lfloor nt_1\rfloor}\left(s_{X_i^*}-\e s_{X_i^*}\right),\sum_{\lfloor nt_1\rfloor+1}^{\lfloor nt_2\rfloor}\left(s_{X_i^*}-\e s_{X_i^*}\right)\right)\\
\longrightarrow\left(b_{t_1},b_{t_2}-b_{t_1}\right)
\text{ in distribution.}
\end{multline}

\vspace{1mm}\par\noindent
To simplify notation we introduce the following shorthand symbols:
\begin{equation}\label{vert}
\begin{split}
s_{\tilde S^*_{0,{\lfloor nt_1\rfloor}}}=&\frac{1}{\tilde\sigma\sqrt n}\sum_{i=0}^{\lfloor nt_1\rfloor}\left(s_{X_i^*}-\e s_{X_i^*}\right),\\
s_{\tilde S^*_{{\lfloor nt_1\rfloor}+1,{\lfloor nt_2\rfloor}}}=&\frac{1}{\tilde\sigma\sqrt n}\sum_{i={\lfloor nt_1\rfloor}+1}^{\lfloor nt_2\rfloor}\left(s_{X_i^*}-\e s_{X_i^*}\right).
\end{split}
\end{equation}
Now we compute the Fourier transform $F$ of the vector
\begin{equation*}
\left(s_{\tilde S^*_{0,\lfloor nt_1\rfloor}},s_{\tilde S^*_{\lfloor nt_1\rfloor+1,\lfloor nt_2\rfloor}}\right)
\end{equation*}
defined in \eqref{vert}. By independence (see Lemma~\ref{i}) we get
\begin{multline*}
F_{s_{\tilde S^*_{0,\lfloor nt_1\rfloor}},s_{\tilde S^*_{\lfloor nt_1\rfloor+1,\lfloor nt_2\rfloor}}}(u_1,u_2)\\=
\e\exp\left(\frac{iu_1}{\tilde\sigma\sqrt n}\sum_{k=0}^{\lfloor nt_1\rfloor}\left(s_{X_k^*}-\e s_{X_k^*}\right)+\frac{iu_2}{\tilde\sigma\sqrt n}\sum_{\lfloor nt_1\rfloor+1}^{\lfloor nt_2\rfloor}\left(s_{X_k^*}-\e s_{X_k^*}\right)\right)\\
=\e\exp\left(\frac{iu_1}{\tilde\sigma\sqrt n}\sum_{k=0}^{\lfloor nt_1\rfloor}\left(s_{X_k^*}-\e s_{X_k^*}\right)\right)\\\cdot
\e\exp\left(\frac{iu_2}{\tilde\sigma\sqrt n}\sum_{k=\lfloor nt_1\rfloor+1}^{\lfloor nt_2\rfloor}\left(s_{X_k^*}-\e s_{X_k^*}\right)\right)\\
=F_{s_{\tilde S^*_{0,\lfloor nt_1\rfloor}}}(u_1)F_{s_{\tilde S^*_{{\lfloor nt_1\rfloor}+1,\lfloor nt_2\rfloor}}}(u_2).
\end{multline*}
Thus,
\begin{equation}\label{Fourier}
\lim_{n\to\infty}F_{s_{\tilde S^*_{0,\lfloor nt_1\rfloor}},s_{\tilde S^*_{\lfloor nt_1\rfloor+1,\lfloor nt_2\rfloor}}}(u_1,u_2)=\lim_{n\to\infty}F_{s_{\tilde S^*_{0,\lfloor nt_1\rfloor}}}(u_1)\lim_{n\to\infty}F_{s_{\tilde S^*_{{\lfloor nt_1\rfloor}+1,\lfloor nt_2\rfloor}}}(u_2)
\end{equation}
provided that the limits on the right exist.
\vspace{1mm}\par\noindent
Therefore let us straightforwardly calculate those limits.
Take the first Fourier transform $F_{s_{\tilde S^*_{0,\lfloor nt_1\rfloor}}}$ on the right hand side of \eqref{Fourier}.
\vspace{1mm}\par\noindent
Clearly,
\begin{multline*}
\left|\frac{1}{\tilde\sigma\sqrt n}\sum_{i=1}^{\lfloor nt_1\rfloor}\left(s_{X_i^*}-\e s_{X_i}\right)-\frac{\sqrt{t_1}}{\tilde\sigma\sqrt{\lfloor nt_1\rfloor}}
\sum_{i=1}^{\lfloor nt_1\rfloor}\left(s_{X_i^*}-\e s_{X_i^*}\right)\right|\\
\longrightarrow 0\text{ in probability.}
\end{multline*}
In fact, rewriting and then applying Chebyshev's inequality we get
\begin{multline*}
\p\left(\left|\frac{1}{\tilde\sigma\sqrt n}\sum_{i=1}^{\lfloor nt_1\rfloor}\left(s_{X_{i}^*}-\e s_{X_{i}^*}\right)-\frac{\sqrt{t_1}}{\tilde\sigma\sqrt{\lfloor nt_1\rfloor}}
\sum_{i=1}^{\lfloor nt_1\rfloor}\left(s_{X_{i}^*}-\e s_{X_{i}^*}\right)\right|>\varepsilon\right)\\
=\p\left(\left|\left(\frac{1}{\tilde\sigma\sqrt n}-\frac{\sqrt{t_1}}{\tilde\sigma_n\sqrt{\lfloor nt_1\rfloor}}\right)
\sum_{i=1}^{\lfloor nt_1\rfloor}\left(s_{X_{i}^*}-\e s_{X_{i}^*}\right)\right|>\varepsilon\right)\\
\leq\frac{\tilde\sigma^2\left(\frac{1}{\tilde\sigma\sqrt n}-\frac{\sqrt{t_1}}{\tilde\sigma\sqrt{\lfloor nt_1\rfloor}}\right)^2}
{\varepsilon^2}\longrightarrow 0,
\end{multline*}
as $n$ goes to infinity.\vspace{1mm}

By the (classical) central limit theorem
\begin{equation*}
\frac{\sqrt{t_1}}{\tilde\sigma\sqrt{\lfloor nt_1\rfloor}}\sum_{i=1}^{\lfloor nt_1\rfloor}\left(s_{X_i}-\e s_{X_i}\right)
\longrightarrow\mathcal N(0,t_1)\text{ in distribution,}
\end{equation*}
where $\mathcal N(m,\sigma^2)$ is a normal real random variable with expectation $m$ and variance $\sigma^2.$ Note that $\e\left(s_{X_i^*}-\e s_{X_i^*}\right)=0$.\par

\bigskip\noindent
In the same way one can show that
\begin{multline*}
\left|\frac{1}{\tilde\sigma\sqrt n}\sum_{i=\lfloor nt_1\rfloor+1}^{\lfloor nt_2\rfloor}\left(s_{X_i^*}-\e s_{X_i^*}\right)-
\frac{\sqrt{t_2-t_1}}{\tilde\sigma(\lfloor nt_2\rfloor-\lfloor nt_1\rfloor-1)^{\frac{1}{2}}}
\sum_{i=\lfloor nt_1\rfloor+1}^{\lfloor nt_2\rfloor}\left(s_{X_i^*}-\e s_{X_i^*}\right)\right|\\
\longrightarrow 0\text{ in probability,}
\end{multline*}
and by the central limit theorem
\begin{equation*}
\frac{\sqrt{t_2-t_1}}{\tilde\sigma\sqrt{\lfloor nt_2\rfloor-\lfloor nt_1\rfloor -1}}\sum_{j=\lfloor nt_1\rfloor+1}^{\lfloor nt_2\rfloor}\left(s_{X_j^*}-\e s_{X_j^*}\right)
\longrightarrow\mathcal N(0,t_2-t_1)\text{ in distribution.}
\end{equation*}
Hence \eqref{Fourier} reads
\begin{equation}\label{ost}
\lim_{n\to\infty}F_{s_{\tilde S^*_{0,\lfloor nt_1\rfloor}},s_{\tilde S^*_{\lfloor nt_1\rfloor+1,\lfloor nt_2\rfloor}}}(u_1,u_2)
=e^{-u_1^2t_1\slash2}e^{-u_2^2(t_2-t_1)\slash2}.
\end{equation}
The right hand side in \eqref{ost} is the product of the Fourier transforms of variables $\mathcal N(0,t_1)$ and $\mathcal N(0,t_2-t_1),$ and the proof is finished.
\end{proof}

\end{document}